
\documentclass[12pt,a4paper]{article}


\usepackage[english]{babel}
\usepackage[T1]{fontenc}
\usepackage[utf8]{inputenc}

\usepackage{bm}
\usepackage{lmodern}
\usepackage[normalem]{ulem}

\usepackage[top=2.25cm, bottom=2.75cm, left=2.75cm, right=2.75cm]{geometry}
\usepackage{latexsym}
\usepackage{pdflscape}
\usepackage{fancyhdr}
\usepackage{setspace}

\usepackage{graphicx}
\usepackage[dvipsnames]{xcolor}
\usepackage{epstopdf}
\usepackage{epsf}
\usepackage{eepic}
\usepackage{tikz}
\usetikzlibrary{shadings,shapes,arrows,calc,positioning,shadings,decorations.markings, patterns}

\usepackage[numbers]{natbib}

\usepackage{amsmath,amssymb,amsfonts,amscd}

\usepackage{enumitem}
\usepackage{caption}
\usepackage[format=hang,margin=10pt]{subcaption}
\usepackage{listings}
\usepackage{varwidth}
\usepackage{mathrsfs}

\usepackage{algorithm}
\usepackage{algpseudocode}
\usepackage[numbered,framed]{matlab-prettifier}
\usepackage{amsthm}
\usepackage[linguistics]{forest}
\forestset{
nice empty nodes/.style={
    for tree={l sep*=2},
    delay={where content={}{shape=coordinate}{}}
},
}

\usepackage{changepage}
\usepackage{xifthen}
\usepackage{todonotes}

\usepackage{hyperref}
\usepackage{cleveref}


\numberwithin{equation}{section}
\setlist[itemize,1]{label=$\bullet$}
\setlist[itemize,2]{label=$\triangleleft$}
\setlist[enumerate,1]{label=(\roman*)}
\setlist[enumerate,2]{label=(\arabic*)}

\setlength{\parindent}{0cm}

\definecolor{TUIl-orange}{RGB}{255, 121, 0}
\definecolor{TUIl-titleblue}{RGB}{0, 68, 121}
\definecolor{TUIl-textblue}{RGB}{0, 51, 88}
\definecolor{TUIl-green}{RGB}{0, 116, 122}
\definecolor{TUIl-grey}{RGB}{165, 165, 165}

\hypersetup{
    breaklinks=true,
    unicode=true,
    pdfpagelayout=OneColumn,
    bookmarksnumbered=true,
    bookmarksopen=true,
    bookmarksopenlevel=0,
    pdfborder={0 0 0},  
    colorlinks=true,
    linkcolor=Cerulean,    
    citecolor=Cerulean,
}


\algnewcommand\algorithmicinput{\textbf{Input:}}
\algnewcommand\AlgInput{\item[\algorithmicinput]}
\algnewcommand\algorithmicoutput{\textbf{Output:}}
\algnewcommand\AlgOutput{\item[\algorithmicoutput]}


\lstMakeShortInline"
\lstset{
  style              = Matlab-editor,
  basicstyle         = \mlttfamily,
  escapechar         = ",
  mlshowsectionrules = true,
  xleftmargin		 = 4em,
}

\newtheoremstyle{dotless}{}{}{\itshape}{}{\bfseries}{}{ }{}
\newtheoremstyle{no-italic}{}{}{}{}{\bfseries}{}{ }{}
\theoremstyle{dotless}
\newtheorem{Theorem}{Theorem}[section]
\newtheorem{Example}[Theorem]{Example}
\newtheorem{Lemma}[Theorem]{Lemma}
\newtheorem{Definition}[Theorem]{Definition}
\newtheorem{Assumption}{Assumption}
\newtheorem{Remark}[Theorem]{Remark}
\newtheorem{Proposition}[Theorem]{Proposition}

\newtheorem{Test Instance}[Theorem]{Test Instance}
\theoremstyle{no-italic}



\newcommand*{\R}{\mathbb{R}}

\newcommand*{\N}{\mathbb{N}}


\DeclareMathOperator{\cl}{cl}			
\DeclareMathOperator{\argmin}{argmin}	





\setlength {\marginparwidth }{2cm}
\usepackage{tikz}

\def\R{{\mathbb R}}

\def\WMin{\textup{WMin}}

\def\Min{\textup{Min}}

\def\Int{\textup{int }}
\def\bd{\textup{bd }}
\def\cl{\textup{cl }}
\def\conv{\textup{conv }}



\title{A Steepest Descent Method for Set Optimization Problems with Set-Valued Mappings of Finite Cardinality }
\author{Gemayqzel Bouza  \thanks{Faculty of Mathematics and Computer Science, University of Havana, 10400  Havana, Cuba,
{\texttt{gema@matcom.uh.cu}}} \and Ernest Quintana \thanks{Institute for Mathematics, Technische Universität Ilmenau, 98693 Ilmenau, Germany,
{\texttt{ernest.quintana-aparicio@tu-ilmenau.de}}} \and Christiane Tammer \thanks{Institute of Mathematics, Martin-Luther-Universität Halle-Wittenberg, 06126 Halle, Germany, {\texttt{christiane.tammer@mathematik.uni-halle.de}}}}
\date{}

\begin{document}

\maketitle

\begin{abstract}
In this paper, we study a first order solution method for a particular class of set optimization problems where the solution concept is given by the set approach. We consider the case in which the set-valued objective mapping is identified by a finite number of continuously differentiable selections. The corresponding set optimization problem is then equivalent to find optimistic solutions to vector optimization problems under uncertainty with a finite  uncertainty set. We develop optimality conditions for these types of problems, and introduce two concepts of critical points. Furthermore, we propose a descent method and provide a convergence result to points satisfying the optimality conditions previously derived. Some numerical examples illustrating the performance of the method are also discussed.  This paper is a modified and polished version of Chapter 5 in the PhD thesis by Quintana (On set optimization with set relations: a scalarization approach to optimality conditions and algorithms, Martin-Luther-Universität Halle-Wittenberg, 2020).
\end{abstract}

\noindent {\small\textbf{Key Words:} set optimization, robust vector optimization, descent method, stationary point}

\vspace{2ex} \noindent {\small\textbf{Mathematics subject
classifications (MSC 2010):}}  	90C29, 90C46, 90C47	


\section{Introduction}

Set optimization is the class of mathematical problems that consists in minimizing set-valued mappings acting between two vector spaces, in which the image space is partially ordered by a given closed, convex and pointed cone. There are two main approaches for defining solution concepts for this type of problems, namely the vector approach and the set approach. In this paper, we deal with the last of these concepts. The main idea of this approach lies on defining a preorder on the power set of the image space, and to consider  minimal solutions of the set-valued problem accordingly. Research in this area started with the works of Young \cite{Young1931}, Nishnianidze \cite{Nishnianidze1984}, and  Kuroiwa \cite{kuroiwa1998, Kuroiwa2001}, in which the first set relations for defining a preorder were considered. Furthermore, Kuroiwa \cite{Kuroiwa1997firstsolutiondef} was the first who considered set optimization problems where the solution concept is given by the set approach. Since then, research in this direction has expanded immensely due to its applications in finance, optimization under uncertainty, game theory, and socioeconomics.  We refer the reader to \cite{KTZ} for a comprehensive overview of the field.

The research topic that concerns us in this paper is the development of efficient algorithms for the solution of set optimization problems. In this setting, the current approaches in the literature can be roughly clustered into four different groups:

\begin{itemize}
\item Derivative free methods \cite{jahn2015desc, jahn2018tree, KK2016}.

In this context, the derived  algorithms are descent methods and use a derivative free strategy \cite{ConnScheinbergVicente2009}. These algorithms are designed to deal with unconstrained problems, and they assume no particular structure of the set-valued objective mapping. The first method of this type was described in \cite{jahn2015desc}. There, the case in which both the epigraphical and hypographical multifunctions of the set-valued objective mapping have convex values was analyzed. This convexity assumption was then relaxed in \cite{KK2016} for the so called upper set less relation. Finally, in \cite{jahn2018tree}, a new method with this strategy was studied. An interesting feature of the algorithm in this reference is that, instead of choosing only one descent direction at every iteration, it considers several of them  at the same time. Thus, the method generates a tree with the initial point as the root, and the possible solutions as leaves.
\item  Algorithms of a sorting type \cite{guntherkobispopovici2019, guntherkobispopovici2019part2,  kobiskuroiwatammer2017, kobistam2018}.

The methods in this class are specifically  designed to treat set optimization problems with a finite feasible set. Because of this, they are based on simple comparisons between the images of the set-valued objective mapping. In \cite{kobiskuroiwatammer2017, kobistam2018}, the algorithms are extensions of those by Jahn  \cite{jahn2006subdiv,jahnrathje2006} for vector optimization problems. They use a so called forward and backward reduction procedures that, in practice, avoid making many of these previously mentioned comparisons. Therefore, these methods perform more efficiently than a naive implementation in which every pair of sets must be compared. More recently, in \cite{guntherkobispopovici2019, guntherkobispopovici2019part2}, an extension of the algorithm by G\"{u}nther and Popovici \cite{guntherpopovici2018} for vector problems was studied. The idea now is to, first, find  an enumeration of the images of the set-valued mapping whose values by a scalarization using a strongly monotone functional are increasing. In a second step, a forward iteration procedure is performed. Due to the presorting step, these methods enjoy an almost optimal computational complexity, compare \cite{KungLuccioPreparata1975}.

\item Algorithms based on scalarization \cite{EhrgottIdeSchobel2014,eichfeldernieblingrocktaschel2019,  idekobis2014, IdeKobisKuroiwa2014, jiangcaoxiong2019, schmidtschobelthom2019}.

The methods in this group follow a scalarization approach, and are derived for problems where the set-valued objective mapping has a particular structure that comes from the so called robust counterpart of a vector optimization problem under uncertainty, see \cite{IdeKobisKuroiwa2014}. In \cite{EhrgottIdeSchobel2014, idekobis2014, IdeKobisKuroiwa2014}, a linear scalarization was employed for solving the set optimization problem. Furthermore, the  $\epsilon$- constraint method was extended too in \cite{EhrgottIdeSchobel2014, idekobis2014}, for the particular case in which the ordering cone is the nonnegative orthant. Weighted Chebyshev scalarization and some of its variants (augmented, min-ordering) were also studied in \cite{idekobis2014, jiangcaoxiong2019, schmidtschobelthom2019}. 

\item Branch and bound \cite{eichfeldernieblingrocktaschel2019}.

The algorithm in \cite{eichfeldernieblingrocktaschel2019} is also designed for uncertain vector optimization problems, but in particular it is assumed that only the decision variable is the source of uncertainty. There, the authors propose a branch and bound method for finding a box covering of the solution set. 
\end{itemize} 

The strategy that we consider in this paper is different to the ones previously described, and is designed for dealing with unconstrained set optimization problems in which the set-valued objective mapping is given by a finite number of continuously differentiable selections. Our motivation for studying problems with this particular structure is twofold:

\begin{itemize}
\item Problems of this type have important applications in optimization under uncertainty.

Indeed, set optimization problems with this structure arise when computing robust solutions to vector optimization problems under uncertainty, if the so called uncertainty set is finite, see  \cite{IdeKobisKuroiwa2014}. Furthermore, the solvability  of problems with a finite uncertainty set is an important component in the treatment of the general case with an infinite uncertainty set, see the cutting plane strategy in \cite{mutapcicboyd2009} and the reduction results in  \cite[Proposition 2.1]{bentalnemirovski1998} and \cite[Theorem 5.9]{EhrgottIdeSchobel2014}.

\item Current algorithms in the literature pose different theoretical and practical difficulties when solving these types of problems.

Indeed, although derivative free methods can be directly applied in this setting, they suffer from the same drawbacks as their counterparts in the scalar case. Specifically, because they make no use of first order information (which we assume is available in our context), we expect them to perform slower in practice that a method who uses these additional properties. Even worse, in the set-valued setting, there is now an increased cost on performing comparisons between sets, which was almost negligible for scalar problems. On the other hand, the algorithms of a sorting type described earlier can not be used in our setting since they require a finite feasible set. Similarly, the branch and bound strategy is designed for problems that do not fit the particular structure that we consider in this paper, and so it can not be taken into account. Finally, we can also consider the algorithms based on scalarization in our context. However, the main drawback of these methods is that, in general, they are not able to recover all the solutions of the set optimization problem. In fact, the $\epsilon$- constraint method, which is known to overcome this difficulty in standard multiobjective optimization, will fail in this setting. 
\end{itemize} Thus, we address in this paper the need of a first order method that exploits the particular structure of the set-valued objective mapping previously mentioned, and does not have the same drawbacks of the other approaches in the literature. 

The rest of the paper is structured as follows. We start in Section \ref{sec: preliminaries} by introducing the main notations, basic concepts and results that will be used throughout the paper. In Section \ref{sec:oc algo}, we derive optimality conditions for set optimization problems with the aforementioned structure. These optimality conditions  constitute the basis of the descent method described in Section \ref{sec:algo algo}, where the full convergence of the algorithm is also obtained.  In Section \ref{sec:numerical algo}, we illustrate the performance of the method on different test instances. We conclude in Section 6 by summarizing our results and proposing ideas for further research.

\section{Preliminaries}\label{sec: preliminaries}
We start this section by introducing the main notations used in the paper. First, the class of all nonempty subsets of $\R^m$ will be denoted by $\mathscr{P}(\R^m).$ Furthermore, for $A \in \mathscr{P}(\R^m)$, we denote by $\Int A$, $\cl A$, $\bd A$ and $\conv A$  the interior, closure, boundary and convex hull of the set $A,$ respectively. All the considered vectors  are column vectors, and we denote the transpose operator with the symbol $\top.$ On the other hand,  $\|\cdot\|$ will stand for either the euclidean norm of a vector or for the standard spectral norm of a matrix, depending on the context.  We also denote the cardinality of a finite set $A$  by $|A|.$ Finally, for $k\in \N,$ we put $[k] = \{1,\ldots,k\}.$

We next consider the most important definitions and properties  involved in the results of the paper. Recall that a set $K \in \mathscr{P}(\R^m)$ is said to be a cone if $t y\in K$ for every $y\in K$ and every $t \geq 0.$ Moreover, a cone $K$ is called convex if $K + K = K,$ pointed if $K\cap (-K)=\{0\},$ and solid if $\Int K \neq \emptyset.$ An important related concept is that of the dual cone. For a cone $K,$ this is the set 

\begin{equation*}
K^*:=\{v \in \R^m \mid \forall\; y\in K: v^\top y\geq 0\}.
\end{equation*} Throughout, we suppose that $K \in \mathscr{P}(\R^m)$ is a cone.

It is well known \cite{GRTZ} that when $K$ is convex and pointed, it generates a partial order $\preceq$ on $\R^m$ as follows: 

\begin{equation}\label{eq:porder}
y\preceq z: \Longleftrightarrow z-y\in K.
\end{equation}
Furthermore, if $K$ is solid, one can also consider the so called strict order $\prec$ which is defined by

\begin{equation}\label{eq:sporder}
y\prec z: \Longleftrightarrow z-y\in \Int K.
\end{equation} In the following definition, we collect the concepts of minimal and weakly minimal elements of a set with respect to $\preceq.$
\begin{Definition}\label{def:wminimalconceptsvop}

Let  $A \in \mathscr{P}(\R^m)$ and suppose that $K$ is closed, convex, pointed, and solid.  

\begin{enumerate}

\item  The set of minimal elements of $A$ with respect to $K$ is defined as

$$\Min (A,K):= \{y \in A\mid \left(y-  K\right)\cap A =\{y\}\}.$$

\item The set of weakly minimal elements of $A$ with respect to $K$ is defined as

$$\WMin (A,K):= \{y \in A\mid \left(y- \Int K\right)\cap A =\emptyset\}.$$
\end{enumerate} 

\end{Definition}

The following proposition will be often used.

\begin{Proposition}(\cite[Theorem $6.3 \;c)$]{Jahn2011})\label{prop:domination property}
Let $A \in \mathscr{P}(\R^m)$ be nonempty and compact, and  $K$ be closed, convex, and pointed. Then, $A$ satisfies the so called domination property with respect to $K$, that is, $\Min (A,K)\neq \emptyset$ and $$A + K = \Min(A,K) + K.$$
\end{Proposition}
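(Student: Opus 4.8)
The plan is to reduce both assertions to a single scalarization argument. First I would observe that the inclusion $\Min(A,K) + K \subseteq A + K$ is immediate from $\Min(A,K) \subseteq A$, so the entire statement follows once I show that for every $a \in A$ there exists $\bar y \in \Min(A,K)$ with $a - \bar y \in K$. Indeed, since $A \neq \emptyset$ this simultaneously yields $\Min(A,K) \neq \emptyset$ and the nontrivial inclusion $A + K \subseteq \Min(A,K) + K$, because each $a \in A$ then lies in $\bar y + K$.

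The key tool is a strictly positive linear functional. Since $K$ is closed, convex, and pointed in the finite-dimensional space $\R^m$, its dual cone $K^*$ is solid, and I may fix some $v \in \Int K^*$. Such a $v$ satisfies $v^\top y > 0$ for every $y \in K \setminus \{0\}$, i.e. $v$ is strictly monotone with respect to $\preceq$. This is the only place where pointedness (rather than mere closedness and convexity) of $K$ enters.

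Now I would fix $a \in A$ and consider the section $C := A \cap (a - K)$. This set is nonempty, as it contains $a$, and, being the intersection of the compact set $A$ with the closed set $a - K$, it is compact. Minimizing the continuous function $y \mapsto v^\top y$ over $C$ therefore produces a minimizer $\bar y \in C$, and by construction $a - \bar y \in K$. To verify that $\bar y \in \Min(A,K)$, I would argue by contradiction: if some $z \in A$ satisfied $\bar y - z \in K \setminus \{0\}$, then $a - z = (a - \bar y) + (\bar y - z) \in K + K = K$, so $z \in C$, while strict positivity of $v$ gives $v^\top z = v^\top \bar y - v^\top(\bar y - z) < v^\top \bar y$, contradicting the minimality of $\bar y$ over $C$. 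Hence no such $z$ exists and $\bar y$ is minimal, completing the reduction.

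The step I expect to require the most care is the existence of the strictly positive functional $v \in \Int K^*$. This rests on the finite-dimensional duality that pointedness of the closed convex cone $K$ is equivalent to solidity of $K^*$, together with the fact that every $v \in \Int K^*$ is strictly positive on $K \setminus \{0\}$ (which follows by perturbing $v$ inside a small ball contained in $K^*$). Everything beyond this point is a routine compactness-plus-minimization argument, so the existence of $v$ is the real crux.
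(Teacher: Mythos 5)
Your proof is correct, and it is worth noting that the paper itself offers no proof of this proposition: it is quoted verbatim from Jahn's book (Theorem 6.3 c) there), so the only comparison available is with that cited argument. Jahn's proof is an existence argument via Zorn's lemma: one orders the section $A \cap (a-K)$ by $\preceq$, shows every chain has a lower bound because the corresponding nested family of compact sections has the finite intersection property, and extracts a minimal element, which is then minimal in all of $A$ by transitivity; this works in general partially ordered linear spaces. Your route is genuinely different and more elementary in the present setting: you replace the transfinite argument by Weierstrass applied to a strictly monotone linear functional $v \in \Int K^*$ on the compact section $C = A \cap (a-K)$, and the verification that the minimizer $\bar y$ lies in $\Min(A,K)$ (via $z \in C$ whenever $\bar y - z \in K \setminus \{0\}$, plus strict positivity of $v$) is airtight. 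What your approach costs is generality: the equivalence ``$K$ closed convex pointed $\Longleftrightarrow$ $K^*$ solid'' is a finite-dimensional phenomenon (in infinite dimensions a pointed closed convex cone can have a dual cone with empty interior), so your argument does not extend beyond $\R^m$, whereas Jahn's does. Since the paper works exclusively in $\R^m$, your proof is fully adequate for the statement as used here, and you correctly identified the existence of the strictly positive functional as the one step requiring the duality fact; your perturbation argument for strict positivity on $K \setminus \{0\}$ is also the standard and correct one.
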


The Gerstewitz scalarizing functional will play also an important role in the main results. 

\begin{Definition}\label{def:funcGW}
Let $K$ be closed, convex, pointed, and solid. For a given element $e \in \Int K,$ the Gerstewitz functional associated to $e$ and $K$ is $\psi_e: \R^m \rightarrow \R$ defined as

\begin{equation}
\psi_e(y):= \min\{t\in \mathbb{R} \mid te\in y+K\}.
\end{equation}
\end{Definition}

Useful properties of this functional are summarized in the next proposition.

\begin{Proposition}(\cite[Section 5.2]{KTZ})\label{prop: properties of tammer function} Let $K$ be  closed, convex, pointed and solid, and consider an element $e \in \Int K.$ Then, the functional $\psi_e$ satisfies the following properties: 
 
\begin{enumerate}

\item \label{item:sublinearity and lipschitz of scalarization} $\psi_e$ is sublinear and Lipschitz on $\R^m.$ 

\item \label{item:monotonicity of scalarization} $\psi_e$ is both monotone and strictly monotone with respect to the partial order $\preceq$, that is, 

$$\forall \; y,z \in \R^m:  y \preceq z \Longrightarrow \psi_e(y)\leq \psi_e(z)$$ and 

$$\forall \; y,z \in \R^m:  y \prec z \Longrightarrow \psi_e(y) < \psi_e(z),$$ respectively.

\item \label{item:representability of scalarization} $\psi_e$ satisfies the so called representability property, that is,
$$- K= \{y \in \R^m \mid \psi_e(y)\leq 0 \}, \quad - \Int K = \{y \in \R^m \mid \psi_e(y)< 0 \}.$$
\end{enumerate}

\end{Proposition}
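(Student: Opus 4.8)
The plan is to prove the three properties directly from the definition $\psi_e(y) = \min\{t \in \R \mid te \in y + K\}$, after first establishing that $\psi_e$ is well defined and finite on all of $\R^m$. For the latter, fix $y \in \R^m$ and set $M_y := \{t \in \R \mid te - y \in K\}$. Nonemptiness of $M_y$ follows from $e \in \Int K$: since $e - \tfrac{1}{t}y \to e$ as $t \to +\infty$ and $\Int K$ is open, one has $e - \tfrac{1}{t}y \in K$ for $t$ large, and scaling by $t > 0$ (using that $K$ is a cone) gives $te - y \in K$. Boundedness below of $M_y$ is where pointedness and closedness enter: if there were $t_n \in M_y$ with $t_n \to -\infty$, dividing $t_n e - y \in K$ by $|t_n|$ and passing to the limit would yield $-e \in \cl K = K$, so $e \in K \cap (-K) = \{0\}$, contradicting $e \in \Int K$. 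Since $K$ is closed, $M_y$ is closed, so the infimum is attained and $\psi_e(y) \in \R$.

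For property (i), positive homogeneity is immediate from the cone property: substituting $s = t/\lambda$ in $\min\{t \mid te \in \lambda y + K\}$ for $\lambda > 0$ gives $\psi_e(\lambda y) = \lambda \psi_e(y)$. Subadditivity follows from $K + K = K$: writing $t_1 = \psi_e(y)$ and $t_2 = \psi_e(z)$, we have $(t_1 + t_2)e - (y+z) = (t_1 e - y) + (t_2 e - z) \in K$, whence $\psi_e(y + z) \leq t_1 + t_2$. For the Lipschitz property, subadditivity yields $\psi_e(y) - \psi_e(z) \leq \psi_e(y - z)$, so it suffices to bound $\psi_e(w) \leq L\|w\|$. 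Choosing $\delta > 0$ so that the open ball of radius $\delta$ centered at $e$ lies in $K$, one checks that $\tfrac{\|w\|}{\delta} e - w = \tfrac{\|w\|}{\delta}\big(e - \delta \tfrac{w}{\|w\|}\big) \in K$, so $\psi_e(w) \leq \tfrac{1}{\delta}\|w\|$ and $L = 1/\delta$ works.

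Properties (ii) and (iii) rest on the absorption identities $K + K = K$ and $K + \Int K \subseteq \Int K$, valid for a solid convex cone. For monotonicity, $z - y \in K$ and $\psi_e(z)e - z \in K$ add to give $\psi_e(z)e - y \in K$, so $\psi_e(y) \leq \psi_e(z)$; for strict monotonicity, if $z - y \in \Int K$ then $\psi_e(z)e - y \in K + \Int K \subseteq \Int K$, and since this point is interior we may subtract a small multiple $\varepsilon e$ and remain in $K$, giving $\psi_e(y) \leq \psi_e(z) - \varepsilon < \psi_e(z)$. For representability, if $\psi_e(y) \le 0$ then $-y = (\psi_e(y)e - y) + (-\psi_e(y)e) \in K + K = K$ because $-\psi_e(y) \ge 0$; conversely $y \in -K$ gives $0 \in M_y$, hence $\psi_e(y) \le 0$. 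The strict version is analogous, using that $-t e \in \Int K$ when $t < 0$ together with $K + \Int K \subseteq \Int K$.

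The only genuinely delicate point is the well-definedness in the first paragraph, specifically the boundedness-below argument, which is precisely where pointedness and closedness of $K$ are indispensable; every remaining step is a routine manipulation built on the two cone identities $K + K = K$ and $K + \Int K \subseteq \Int K$.
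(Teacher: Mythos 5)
Your proof is correct, but note that the paper itself offers no proof of this proposition at all: it is imported as a known result, cited to \cite[Section 5.2]{KTZ}. So there is no internal argument to compare against; what you have done is reconstruct, from scratch, the standard textbook proof of the properties of the Gerstewitz functional, and your reconstruction is sound. Your decomposition is the classical one: first show the minimum in the definition is attained (nonemptiness of $M_y$ from $e \in \Int K$, boundedness below from pointedness and closedness, closedness of $M_y$ from closedness of $K$), then derive everything else from attainment plus the two identities $K + K = K$ and $K + \Int K \subseteq \Int K$. This is exactly the route taken in the cited literature, and your emphasis that attainment is the only delicate point is well placed, since the paper's Definition \ref{def:funcGW} writes $\min$ rather than $\inf$ and thereby tacitly assumes it. Two cosmetic repairs you should make. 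First, in the boundedness-below argument, the conclusion $e \in K \cap (-K) = \{0\}$ gives $e = 0$; to contradict $e \in \Int K$ you need the additional (easy) observation that a pointed cone in $\R^m$, $m \geq 1$, cannot have $0$ as an interior point, since $0 \in \Int K$ would force $K = \R^m$. Second, in the Lipschitz step the point $e - \delta\tfrac{w}{\|w\|}$ lies on the \emph{boundary} of the open ball of radius $\delta$ around $e$, not inside it; either pass to the closed ball (contained in $K$ because $K$ is closed) or simply use $\delta/2$. Neither issue affects the substance of the argument.
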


We next introduce the set relations between the nonempty subsets of $\R^m$ that will be used in the definition of the set optimization problem we consider. We refer the reader to \cite{jahnha2011, karamanemrahetal2018} and the references therein for other set relations. 

\begin{Definition}(\cite{KTH})\label{def-set-relation}
For the given cone $K,$ the lower set less relation $\preceq^\ell$ is the binary relation defined on $\mathscr{P}(\R^m)$ as follows: 

$$ \forall\; A,B \in \mathscr{P}(\R^m):  A\preceq^\ell B: \Longleftrightarrow B\subseteq A+K.$$ Similarly, if $K$ is solid, the strict lower set less relation $\prec^\ell$ is the binary relation defined on $\mathscr{P}(\R^m)$ by:

$$ \forall\; A,B \in \mathscr{P}(\R^m):  A\prec^\ell B: \Longleftrightarrow B\subseteq A+ \Int K.$$

\end{Definition}
\begin{Remark}\label{rem:set relation=partial order}
Note that for any two vectors $y,z \in \R^m$ the following equivalences hold:

$$ \{y\} \preceq^\ell \{z\} \Longleftrightarrow y \preceq z,   \;\; \{y\} \prec^\ell \{z\} \Longleftrightarrow y \prec z.$$ Thus, the restrictions of $\preceq^\ell$ and $\prec^\ell$ to the singletons in $\mathscr{P}(\R^m)$ are equivalent  to $\preceq$ and $\prec,$ respectively.

\end{Remark}

We are now ready to present the set optimization problem together with a solution concept based on set relations. 

\begin{Definition}
Let $F:\R^n \rightrightarrows \R^m$ be a given set-valued mapping and suppose that $K$ is closed, convex, pointed, and solid. The set optimization problem with this data is formally represented as 

\begin{equation}\label{eq: SP}
 \begin{array}{ll}
\preceq^\ell\textrm{- }\min\limits_{ x\in \R^n} \; F(x), \tag{$\mathcal{SP}_\ell$}\\
\end{array} 
\end{equation} and a solution is understood in the following sense: we say that a point $\bar{x} \in \R^n$ is a local weakly minimal solution  of \eqref{eq: SP} if there exists a neighborhood $U$ of $\bar{x}$ such that the following holds:

$$\nexists \; x \in U: F(x) \prec^\ell F(\bar{x}).$$ Moreover, if we can choose $U = \R^n$ above, we simply say that $\bar{x}$ is a weakly minimal solution  of \eqref{eq: SP}. 
\end{Definition}

\begin{Remark}
A related problem to \eqref{eq: SP} that is relevant in our paper is the so called vector optimization problem \cite{Jahn2011,Luc2}. There, for a vector-valued mapping $f: \R^n \rightarrow \R^m,$ one considers 

\begin{equation*}
 \begin{array}{ll}
\preceq\textrm{- }\min\limits_{ x\in \R^n} \; f(x), \\
\end{array} 
\end{equation*} where a point $\bar{x}$ is said to be a weakly minimal solution if $f(\bar{x}) \in \WMin(f[\R^n],K)$ (corresponding to Definition \ref{def:wminimalconceptsvop}). Taking into account Remark \ref{rem:set relation=partial order}, it is easy to verify that this solution concept coincides with ours for \eqref{eq: SP} when the set-valued mapping $F$ is given by $F(x):= \{f(x)\}$ for every $x \in \R^n.$  
\end{Remark}

We conclude the section by establishing the main assumption employed in the rest of the paper for the treatment of \eqref{eq: SP}:

\begin{Assumption}\label{ass algorithm} 
Suppose that $K \in \mathscr{P}(\R^m)$ is a closed, convex, pointed and solid cone, and that $ e\in \Int K$ is fixed. Furthermore, consider a reference point  $\bar{x} \in \R^n,$ given vector-valued functions $f^1, f^2,\ldots, f^p: \R^n \rightarrow \R^m$ that are continuously differentiable, and assume that the set-valued mapping $F$ in \eqref{eq: SP} is defined by 

\begin{equation*}
F(x)=\bigg\{f^1(x), f^2(x),\ldots, f^p(x) \bigg\}.
\end{equation*} 
\end{Assumption}  

\section{Optimality Conditions}\label{sec:oc algo}

In this section, we study optimality conditions for weakly minimal solutions of \eqref{eq: SP} under Assumption \ref{ass algorithm}. These conditions are the foundation on which the proposed algorithm is built. In particular, because of the resemblance of our method with standard gradient descent in the scalar case, we are interested in Fermat rules for set optimization problems. Recently, results of this type were derived in \cite{bouzaquintanatuantammer2020}, see also \cite{amahroqoussarhan2019}. There, the optimality conditions involve the computation of the limiting normal cone \cite{Mordukhovich1} of the set-valued mapping $F$ at different points in its graph. However, this is a difficult task in our case because the graph of $F$  is the union of the graphs of the vector-valued functions $f^i,$ and to the best of our knowledge there is no exact formula for finding the normal cone to the union of sets (at a given point) in terms of the initial data. Thus, instead of considering the results from  \cite{bouzaquintanatuantammer2020}, we  exploit the particular structure of $F$ and the differentiability of the functionals $f^i$ to deduce new necessary conditions.

We start by defining some index-related set-valued mappings that will be of importance. They make use of the concepts introduced  in Definition \ref{def:wminimalconceptsvop}.

\begin{Definition}\label{def:index set valued mappings}
The following set-valued mappings are defined:

\begin{enumerate}
\item The active index of minimal elements associated to $F$ is $I:\R^n \rightrightarrows [p]$ given by $$I(x):= \big\{i \in  [p] \mid f^i(x) \in \Min (F(x),K) \big\}.$$

\item The active index of weakly minimal elements associated to $F$ is $I_0:\R^n \rightrightarrows [p]$ defined as  $$I_0(x):= \big\{i \in  [p] \mid f^i(x) \in \WMin (F(x),K) \big\}.$$

\item For a vector $v\in \R^m,$ we define $I_v:\R^n \rightrightarrows [p]$ as 

$$I_v(x):= \{i \in I(x) \mid f^i(x) = v\}.$$ 
\end{enumerate}
\end{Definition} 
 
It follows directly from the definition that  $I_v(x) = \emptyset$ whenever $v \notin \Min (F(x),K)$ and that 

\begin{equation}\label{eq:partition of I}
\forall \; x \in \R^n: I(x) = \bigcup\limits_{v \in \Min(F(x),K)} I_v(x).
\end{equation}

\begin{Definition}
The map $\omega:\R^n \rightarrow \R$ is defined as the cardinality of the set of minimal elements of $F$, that is, 

$$\omega(x): = |\Min (F(x),K)|.$$ Furthermore, we set $\bar{\omega}:= \omega(\bar{x}).$
\end{Definition}

From now on we consider that, for any point $x \in \R^n,$ an enumeration $\{v^x_1,\ldots, v^x_{\omega(x)}\}$ of the set $\Min (F(x),K)$ has been chosen in advance.

\begin{Definition}
For a given point $x\in \R^n,$ consider the enumeration $\{v^x_1,\ldots, v^x_{\omega(x)}\}$ of the set $\Min (F(x),K).$ The partition set of $x$ is defined as 

$$P_x:= \prod\limits_{j=1}^{\omega(x)}I_{v^x_j}(x),$$ where $I_{v^x_j}(x)$ is given in Definition \ref{def:index set valued mappings} $(iii)$ for $j \in [\omega(x)].$

\end{Definition}

The optimality conditions for \eqref{eq: SP} we will present are based on the following idea: from the particular structure of $F,$ we will construct a family of vector optimization problems that, together, locally represent \eqref{eq: SP} (in a sense to be specified) around the point which must be checked for optimality. Then, (standard) optimality conditions are applied to the family of vector optimization problems. The following lemma is the key step in that direction. 

\begin{Lemma}\label{lem: equivalent vector problem lower}

Let  $\tilde{K} \in \mathscr{P}\left(\R^{m \bar{\omega}} \right)$ be the cone defined as

\begin{equation}\label{eq:tildeK}
\tilde{K}:= \prod\limits_{j=1}^{\bar{\omega}} K,
\end{equation} and let us denote by $\preceq_{\tilde{K}}$ and $\prec_{\tilde{K}}$  the partial order and the strict order in $\R^{m\bar{\omega}}$ induced by $\tilde{K},$ respectively (see \eqref{eq:sporder}). Furthermore, consider the partition set $P_{\bar{x}}$ associated to $\bar{x}$ and define, for every  $a \in P_{\bar{x}},$ the functional $\tilde{f}^a: \R^n \rightarrow \prod\limits_{j=1}^{\bar{\omega}} \R^m$ as 

\begin{equation}
\tilde{f}^a(x):= \begin{pmatrix}
 f^{a_1}(x)\\ \vdots \\ f^{a_{\bar{\omega}}}(x)
\end{pmatrix}.
\end{equation} Then, $\bar{x}$ is a local weakly minimal solution of \eqref{eq: SP} if and only if, for every $a \in P_{\bar{x}},$ $\bar{x}$ is a local weakly minimal solution of the vector optimization problem 
\begin{equation}\label{eq: associated vector opt problem lower}
 \begin{array}{ll}
\preceq_{\tilde{K}} \textrm{- }\min\limits_{x\in \R^n} \; \tilde{f}^a(x). \tag{$\mathcal{VP}_a$}\\

\end{array} 
\end{equation}

\end{Lemma}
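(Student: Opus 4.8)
The plan is to establish the equivalence by characterizing the strict lower set less relation $F(x) \prec^\ell F(\bar{x})$ in terms of the functions $f^i$ and then connecting this to the vector orders $\prec_{\tilde{K}}$. The core observation is that, by the representability property (Proposition \ref{prop: properties of tammer function}, item \ref{item:representability of scalarization}) and the domination property (Proposition \ref{prop:domination property}), the strict domination $F(x) \prec^\ell F(\bar{x})$ should be expressible purely in terms of the minimal elements of $F(\bar{x})$. Indeed, since $F(\bar{x})$ is a finite (hence compact) set, one has $F(\bar{x}) \subseteq \Min(F(\bar{x}),K) + K$, so $F(x) \prec^\ell F(\bar{x})$, i.e. $F(\bar{x}) \subseteq F(x) + \Int K$, should be equivalent to requiring that each minimal element $v^{\bar{x}}_j$ of $F(\bar{x})$ lies in $F(x) + \Int K$. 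Thus I would first prove the reduction: $F(x) \prec^\ell F(\bar{x})$ if and only if for every $j \in [\bar{\omega}]$ there exists $i \in [p]$ with $f^i(x) \prec v^{\bar{x}}_j$.

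Next I would connect this pointwise-over-$j$ condition to the product structure. For each $j$, the existence of an index $i$ with $f^i(x) \prec v^{\bar{x}}_j$ is what fails to hold when $\bar{x}$ is optimal; the subtlety is that the dominating index $i$ may depend on $x$. This is precisely why the partition set $P_{\bar{x}}$ and the family $\{(\mathcal{VP}_a)\}_{a \in P_{\bar{x}}}$ appears: an element $a \in P_{\bar{x}}$ selects, for each $j \in [\bar{\omega}]$, one active index $a_j \in I_{v^{\bar{x}}_j}(\bar{x})$, i.e. one function $f^{a_j}$ with $f^{a_j}(\bar{x}) = v^{\bar{x}}_j$. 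The direction I expect to be routine is that if $\bar{x}$ fails to be locally weakly minimal for \eqref{eq: SP}, then one can produce, along a sequence $x_k \to \bar{x}$ with $F(x_k) \prec^\ell F(\bar{x})$, dominating indices; by passing to a subsequence (finiteness of $[p]$) one fixes a single choice realizing the domination, and continuity of the $f^i$ together with $f^{a_j}(\bar{x}) = v^{\bar{x}}_j$ forces that choice to come from an active index, yielding some $a \in P_{\bar{x}}$ for which $\bar{x}$ is not locally weakly minimal for $(\mathcal{VP}_a)$.

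For the converse direction, I would argue that if $\bar{x}$ is not locally weakly minimal for some $(\mathcal{VP}_a)$, there is a sequence $x_k \to \bar{x}$ with $\tilde{f}^a(x_k) \prec_{\tilde{K}} \tilde{f}^a(\bar{x})$, which unpacks as $f^{a_j}(x_k) \prec f^{a_j}(\bar{x}) = v^{\bar{x}}_j$ for every $j$. This immediately gives, for each $j$, a function value in $F(x_k)$ strictly below $v^{\bar{x}}_j$, hence $v^{\bar{x}}_j \in F(x_k) + \Int K$ for all $j$; combined with the reduction from the first step, this yields $F(x_k) \prec^\ell F(\bar{x})$, contradicting local weak minimality of $\bar{x}$ for \eqref{eq: SP}.

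The main obstacle I anticipate is making the reduction step fully rigorous, specifically the claim that $F(\bar{x}) \subseteq F(x) + \Int K$ can be tested only on the minimal elements $v^{\bar{x}}_j$. The forward implication is clear since $\Min(F(\bar{x}),K) \subseteq F(\bar{x})$; the reverse needs the domination property to write a general point of $F(\bar{x})$ as $v^{\bar{x}}_j + k$ with $k \in K$, and then one must verify that $v^{\bar{x}}_j \in F(x) + \Int K$ together with $k \in K$ yields membership in $F(x) + \Int K$, which relies on $K + \Int K \subseteq \Int K$ (valid since $K$ is a convex solid cone). A second delicate point, in the "not optimal $\Rightarrow$ some $(\mathcal{VP}_a)$ not optimal" direction, is the subsequence argument ensuring the dominating indices stabilize and that continuity forces them into the active sets $I_{v^{\bar{x}}_j}(\bar{x})$; I would handle this carefully using the finiteness of $P_{\bar{x}}$ and the fact that, for $x_k$ close to $\bar{x}$, values $f^i(x_k)$ dominating $v^{\bar{x}}_j$ must come from functions satisfying $f^i(\bar{x}) \preceq v^{\bar{x}}_j$, whence $f^i(\bar{x}) = v^{\bar{x}}_j$ by minimality of $v^{\bar{x}}_j$.
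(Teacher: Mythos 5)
Your proposal is correct and takes essentially the same route as the paper's own proof: the reduction step you isolate (testing $F(x)\prec^\ell F(\bar{x})$ only on the minimal elements $v^{\bar{x}}_j$, via the domination property and $\Int K + K\subseteq \Int K$) is precisely the inclusion chain the paper uses inline, and your two sequence arguments—stabilizing the dominating indices by finiteness of $[p]$, passing to the limit, and invoking minimality of $v^{\bar{x}}_j$ to conclude the indices lie in $I_{v^{\bar{x}}_j}(\bar{x})$, hence form a tuple in $P_{\bar{x}}$—match the paper's proof step for step. No gaps.
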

\begin{proof}We argue by contradiction in both cases. First, assume that $\bar{x}$ is a local weakly minimal solution of \eqref{eq: SP} and that, for some $a\in P_{\bar{x}},$  $\bar{x}$ is not a local weakly minimal solution of \eqref{eq: associated vector opt problem lower}. Then, we could find a sequence $\{x_k\}_{k\geq 1}\subset \R^n$  such that $x_k \to \bar{x}$ and 

\begin{equation}\label{eq:vpa minimizing sequence}
\forall \; k \in \N: \tilde{f}^a(x_k) \prec_{\tilde{K}} \tilde{f}^a(\bar{x}).
\end{equation} Hence, we deduce that

\begin{eqnarray*}
\forall \; k \in \N: F(\bar{x}) & \overset{(\textrm{Proposition } \ref{prop:domination property})}{\subseteq} & \{f^{a_1}(\bar{x}),\ldots, f^{a_{\bar{\omega}}} (\bar{x})\} + K\\
           & \overset{\eqref{eq:vpa minimizing sequence}}{\subseteq }&  \{f^{a_1}(x_k),\ldots, f^{a_{\bar{\omega}}} (x_k)\} + \Int K+ K\\
           & \subseteq & F(x_k) + \Int K.
\end{eqnarray*} Since this is equivalent to $F(x_k) \prec^\ell F(\bar{x})$ for every $k \in \N$ and $x_k \to \bar{x},$ we contradict the weak minimality of $\bar{x}$ for \eqref{eq: SP}.

Next, suppose that $\bar{x}$ is  a local weakly minimal solution of \eqref{eq: associated vector opt problem lower} for every $a \in P_{\bar{x}}$, but not a local weakly minimal solution of \eqref{eq: SP}. Then, we could find a sequence $\{x_k\}_{k\geq 1} \subset \R^n$ such that $x_k \to \bar{x}$ and $F(x_k) \prec^\ell F(\bar{x})$ for every $k\in \N.$ Consider the enumeration $\{v^{\bar{x}}_1,\ldots, v^{\bar{x}}_{\bar{\omega}}\}$ of the set $\Min (F(\bar{x}),K).$ Then, 

\begin{equation}\label{eq:intermediate ineq jik}
\forall \; j\in [\bar{\omega}], k \in \N, \exists \; i_{(j,k)} \in [p]:f^{i_{(j,k)}}(x_k)\prec v^{\bar{x}}_j.
\end{equation} Since the indexes $i_{(j,k)}$ are being chosen on the finite set $[p]$, we can assume without loss of generality that $i_{(j,k)}$ is independent of $k,$ that is, $i_{(j,k)} = \bar{i}_j$ for every $k\in \N$ and some $\bar{i}_j \in [p].$ Hence, taking the limit in \eqref{eq:intermediate ineq jik} when $k \to + \infty$, we get 

\begin{equation}\label{eq:interm 2}
\forall\; j \in  [\bar{\omega}]: f^{\bar{i}_j}(\bar{x})\preceq v^{\bar{x}}_j.
\end{equation} Because $v^{\bar{x}}_j \in \Min (F(\bar{x}),K),$ it follows from \eqref{eq:interm 2} that $f^{\bar{i}_j}(\bar{x})= v^{\bar{x}}_j$ and that $\bar{i}_j \in I(\bar{x})$ for every $j \in  [\bar{\omega}].$ Consider now the tuple $\bar{a}:= (\bar{i}_1,\ldots ,\bar{i}_{\bar{\omega}}).$ Then, it can be verified that $\bar{a}\in P_{\bar{x}}.$ Moreover, from  \eqref{eq:intermediate ineq jik} we deduce that $\tilde{f}^{\bar{a}}(x_k) \prec_{\tilde{K}} \tilde{f}^{\bar{a}}(\bar{x})$ for every $k \in \N.$ Since $x_k \to \bar{x}$, this contradicts the weak minimality of $\bar{x}$ for \eqref{eq: associated vector opt problem lower} when $a = \bar{a}.$ 
\end{proof}

We now establish the necessary optimality condition for \eqref{eq: SP} that will be used  in our descent method.
\begin{Theorem}\label{thm:oc sopt finite}

Suppose that $\bar{x}$ is a local weakly minimal solution of \eqref{eq: SP}. Then,

\begin{equation}\label{eq:oc soptfinite}
\forall \; a \in P_{\bar{x}}, \; \exists\; \mu_1, \mu_2, \ldots , \mu_{\bar{w}} \in K^* : \sum_{j=1}^{\bar{\omega
}}\nabla f^{a_j}(\bar{x})\mu_j=0,\; (\mu_1,\ldots,\mu_{\bar{w}})\neq 0. 
\end{equation} Conversely, assume that $f^i$ is $K$-  convex for each $i \in I(\bar{x})$, that is, 

$$\forall\; i \in I(\bar{x}), x_1,x_2 \in \R^n, t \in [0,1]: f^i(t x_1 +(1-t)x_2) \preceq tf^i(x_1)+ (1-t)f^i(x_2).$$

Then, condition  \eqref{eq:oc soptfinite} is also sufficient for the  local weak minimality of $\bar{x}.$ 
\end{Theorem}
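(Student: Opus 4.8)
The plan is to reduce both implications, via Lemma~\ref{lem: equivalent vector problem lower}, to the family of smooth vector optimization problems $(\mathcal{VP}_a)$ indexed by $a \in P_{\bar{x}}$, and then to invoke the classical first-order optimality conditions for vector optimization (necessary in general, necessary and sufficient under convexity). The only genuinely problem-specific bookkeeping is the passage between the product cone $\tilde{K}=\prod_{j=1}^{\bar{\omega}}K$ and its dual, together with the block structure of $\nabla\tilde{f}^a$.

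For the necessary part, Lemma~\ref{lem: equivalent vector problem lower} says that $\bar{x}$ is a local weakly minimal solution of $(\mathcal{VP}_a)$ for every $a \in P_{\bar{x}}$; fix such an $a$. The first step is to rule out descent directions, i.e.\ to show there is no $d \in \R^n$ with $\nabla\tilde{f}^a(\bar{x})^\top d \in -\Int\tilde{K}$: a first-order Taylor expansion of $\tilde{f}^a$ at $\bar{x}$, combined with the openness of $\Int\tilde{K}$, would otherwise yield points $\bar{x}+td$ with small $t>0$ satisfying $\tilde{f}^a(\bar{x}+td)\prec_{\tilde{K}}\tilde{f}^a(\bar{x})$, contradicting local weak minimality. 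Hence the linear subspace $\{\nabla\tilde{f}^a(\bar{x})^\top d \mid d \in \R^n\}$ (the range of the Jacobian) does not meet the nonempty open convex cone $-\Int\tilde{K}$, and a separation argument produces a nonzero $\lambda$ that vanishes on this subspace and is nonnegative on $\tilde{K}$, i.e.\ $\lambda \in \tilde{K}^*\setminus\{0\}$ with $\nabla\tilde{f}^a(\bar{x})\lambda = 0$. Since $\tilde{K}=\prod_{j=1}^{\bar{\omega}}K$ gives $\tilde{K}^*=\prod_{j=1}^{\bar{\omega}}K^*$, writing $\lambda=(\mu_1,\ldots,\mu_{\bar{\omega}})$ with $\mu_j\in K^*$ and using $\nabla\tilde{f}^a(\bar{x})\lambda=\sum_{j=1}^{\bar{\omega}}\nabla f^{a_j}(\bar{x})\mu_j$ yields exactly \eqref{eq:oc soptfinite}.

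For the converse, I again invoke Lemma~\ref{lem: equivalent vector problem lower}: it suffices to show that, under the convexity hypothesis, \eqref{eq:oc soptfinite} forces $\bar{x}$ to be a (global, hence local) weakly minimal solution of every $(\mathcal{VP}_a)$. The key observation is that for $a\in P_{\bar{x}}$ each index $a_j$ lies in $I_{v^{\bar{x}}_j}(\bar{x})\subseteq I(\bar{x})$, so each $f^{a_j}$ is $K$-convex and thus $\tilde{f}^a$ is $\tilde{K}$-convex. Given multipliers as in \eqref{eq:oc soptfinite}, I form the scalar function $g(x):=\sum_{j=1}^{\bar{\omega}}\mu_j^\top f^{a_j}(x)=\langle(\mu_1,\ldots,\mu_{\bar{\omega}}),\tilde{f}^a(x)\rangle$, which is convex because $\tilde{f}^a$ is $\tilde{K}$-convex and $(\mu_1,\ldots,\mu_{\bar{\omega}})\in\tilde{K}^*$, and whose gradient at $\bar{x}$ is zero by \eqref{eq:oc soptfinite}; hence $\bar{x}$ globally minimizes $g$. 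If $\bar{x}$ were not weakly minimal for $(\mathcal{VP}_a)$, there would exist $x$ with $\tilde{f}^a(\bar{x})-\tilde{f}^a(x)\in\Int\tilde{K}$, and since a nonzero element of $\tilde{K}^*$ is strictly positive on $\Int\tilde{K}$, this gives $g(x)<g(\bar{x})$, a contradiction.

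Finally, I would record the one auxiliary fact carrying real weight: a nonzero $\lambda\in\tilde{K}^*$ satisfies $\langle\lambda,d\rangle>0$ for all $d\in\Int\tilde{K}$, which follows from solidity of $\tilde{K}$ by a short argument (if $\langle\lambda,d_0\rangle=0$ at an interior point $d_0$, perturbing $d_0$ in every direction forces $\lambda=0$). I expect the main obstacle to be the necessary direction, where one must justify the separation with no constraint qualification; this is legitimate precisely because $(\mathcal{VP}_a)$ is unconstrained, so the cone of linearized feasible directions is the full range of the Jacobian and a Gordan-type alternative applies directly. The identification $\tilde{K}^*=\prod_j K^*$ and the matching of the block-gradient sum to \eqref{eq:oc soptfinite} are routine but must be carried out with care.
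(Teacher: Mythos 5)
Your proposal is correct and follows essentially the same route as the paper: both arguments reduce the theorem, via Lemma \ref{lem: equivalent vector problem lower}, to the family of vector optimization problems \eqref{eq: associated vector opt problem lower} over $a \in P_{\bar{x}}$, identify $\tilde{K}^* = \prod_{j=1}^{\bar{\omega}} K^*$ to unpack the block structure, and then invoke the classical first-order conditions for smooth vector optimization (necessary in general, sufficient under $K$-convexity). The only difference is that where the paper cites these conditions from the literature (\cite{Luc2} for necessity, \cite{DrummondSvaiter2005} for sufficiency), you prove them inline --- by Taylor expansion plus separation of the Jacobian range from $-\Int \tilde{K}$, and by linear scalarization with a dual-cone multiplier, respectively --- which is precisely the content of the cited results.
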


\begin{proof}
By Lemma \ref{lem: equivalent vector problem lower}, we get that $\bar{x}$ is a local weakly minimal solution of \eqref{eq: associated vector opt problem lower} for every $a\in P_{\bar{x}}.$ Applying now \cite[Theorem 4.1]{Luc2} for every $a\in P_{\bar{x}}$, we get 

\begin{equation}\label{eq:oc sp2}
\forall \; a\in P_{\bar{x}},\exists \; \mu \in \tilde{K}^*\setminus\{0\}: \nabla \tilde{f}^a(\bar{x})\mu =0.
\end{equation} Since $\tilde{K}^* = \prod\limits_{j=1}^{\bar{\omega}} K^*,$ it is easy to verify that  \eqref{eq:oc sp2} is equivalent to the first part of the statement

In order to see the sufficiency under convexity, assume that $\bar{x}$ satisfies \eqref{eq:oc soptfinite}. Note that for any $a \in P_{\bar{x}}$, the function $\tilde{f}^a$ is $\tilde{K}$-  convex provided that each $f^i$ is $K$-  convex for every $i \in I(\bar{x}).$ Then, in this case, it is well known that \eqref{eq:oc sp2} is equivalent to $\bar{x}$ being a local weakly minimal solution of  \eqref{eq: associated vector opt problem lower} for every $a \in P_{\bar{x}},$ see \cite{DrummondSvaiter2005}. Applying now Lemma \ref{lem: equivalent vector problem lower}, we obtain that $\bar{x}$ is a local weakly minimal solution of \eqref{eq: SP}. 
\end{proof}
Based on Theorem \ref{thm:oc sopt finite}, we define the following concepts of stationarity for \eqref{eq: SP}.
 
\begin{Definition}\label{def:stat sop2}
We say that $\bar{x}$ is a   stationary point of \eqref{eq: SP} if there exists a nonempty set $Q \subseteq P_{\bar{x}}$ such that the following assertion holds:
\begin{equation}\label{eq: stationarity lower def}
\forall \; a \in Q, \; \exists\; \mu_1, \mu_2, \ldots , \mu_{\bar{w}} \in K^* : \sum_{j=1}^{\bar{\omega
}}\nabla f^{a_j}(\bar{x})\mu_j=0,\; (\mu_1,\ldots,\mu_{\bar{w}})\neq 0. 
\end{equation} In that case, we also say that $\bar{x}$ is   stationary with respect to $Q.$ If, in addition, we can choose $Q = P_{\bar{x}}$ in \eqref{eq: stationarity lower def}, we simply  call $\bar{x}$ a strongly   stationary point. 
\end{Definition}

\begin{Remark}\label{rem:weak stat is stat for vp} It follows from Definition \ref{def:stat sop2} that a strongly stationary point of \eqref{eq: SP} is also stationary with respect to $Q$ for every $Q \subseteq P_{\bar{x}}.$ Furthermore, from Theorem \ref{thm:oc sopt finite}, it is clear that stationarity is also a necessary optimality condition for \eqref{eq: SP}.
\end{Remark}

In the following example, we illustrate a comparison of our optimality conditions  with  previous ones in the literature for standard optimization problems. 

\begin{Example}
Suppose that in Assumption \ref{ass algorithm} we have $m = 1, K = \R_+.$   Furthermore, consider the functional $f : \R^n \rightarrow \R$ defined as $$f(x) := \min\limits_{i \in [p]} f^i(x)$$ and problem \eqref{eq: SP} associated to this data. Hence, in this case, $$P_{\bar{x}} = I(\bar{x}) = \{i \in [p] \mid f^i(\bar{x}) = f(\bar{x})\}.$$    It is then easy to verify that the following statements hold:
\begin{enumerate}

\item $\bar{x}$ is strongly   stationary for \eqref{eq: SP} if and only if 

$$\forall \; i \in I(\bar{x}) : \nabla f^i(\bar{x}) = 0.$$ 

\item $\bar{x}$ is   stationary for \eqref{eq: SP} if and only if 

$$\exists \; i \in I(\bar{x}) : \nabla f^i(\bar{x}) = 0.$$

\end{enumerate} On the other hand, it is straightforward to verify that $\bar{x}$ is a weakly minimal solution of  \eqref{eq: SP} if and only if $\bar{x}$ is a solution of the problem 

\begin{equation}\label{eq:P}
 \begin{array}{ll}
\min\limits_{ x\in \R^n} \; f(x). \tag{$\mathcal{P}$}\\
\end{array} 
\end{equation} Moreover, if we denote by $\widehat{\partial} f(\bar{x})$ and $\partial f (\bar{x})$ the Fr\'{e}chet and Mordukhovich subdifferential  of $f$ at the point $\bar{x}$  respectively (see \cite{Mordukhovich1}),  it follows from \cite[Proposition 1.114]{Mordukhovich1} that the inclusions 

\begin{equation}\label{eq:stat frechet chp5}
0 \in  \widehat{\partial} f(\bar{x})
\end{equation} and

\begin{equation}\label{eq:mordu stat chp5}
 0\in  \partial f (\bar{x})
\end{equation} are necessary for  $\bar{x}$ being a solution of \eqref{eq:P}. A point $\bar{x}$ satisfying \eqref{eq:stat frechet chp5} and  \eqref{eq:mordu stat chp5} is said to be Fr\'{e}chet and Mordukhovich stationary for \eqref{eq:P}, respectively.  Furthermore, from \cite[Proposition 5]{eberhardroschina2019} and  \cite[Proposition 1.113]{Mordukhovich1}, we have 

\begin{equation}\label{eq:frechet of minimum}
\widehat{\partial} f(\bar{x}) = \bigcap_{i \in I(\bar{x})} \{\nabla f^i(\bar{x})\}
\end{equation} and

\begin{equation}\label{eq:mordu of minimum}
\partial f(\bar{x}) \subseteq \bigcup_{i \in I(\bar{x})} \{\nabla f^i(\bar{x})\} 
\end{equation} respectively. Thus, from \eqref{eq:stat frechet chp5}, \eqref{eq:frechet of minimum} and $(i)$, we deduce that 

\begin{enumerate}
\setcounter{enumi}{2}
\item $\bar{x}$ is strongly   stationary for \eqref{eq: SP} if and only if $\bar{x}$ is Fr\'{e}chet stationary for \eqref{eq:P}. 
\end{enumerate}

Similarly, from \eqref{eq:mordu stat chp5}, \eqref{eq:mordu of minimum} and $(ii)$, we find that

\begin{enumerate}
\setcounter{enumi}{2}
\item If $\bar{x}$ is Mordukhovich stationary for \eqref{eq:P}, then $\bar{x}$ is   stationary for \eqref{eq: SP}. 
\end{enumerate}
\end{Example}

We close the section with the following proposition, that presents an alternative characterization of   stationary points.

\begin{Proposition}
Let $Q \subseteq P_{\bar{x}}$ be given. Then, $\bar{x}$ is   stationary for \eqref{eq: SP} with respect to $Q$ if and only if

\begin{equation}\label{eq:stat alternative}
\forall \; a \in Q, u \in \R^n, \exists \;j \in  [\bar{\omega}] : \nabla f^{a_j}(\bar{x})^\top u \notin - \Int K.
\end{equation}

\end{Proposition}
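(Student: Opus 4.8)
The plan is to reduce the biconditional to a single theorem of the alternative of Gordan type, applied separately for each $a \in Q$. Observe that both sides of the claimed equivalence have the form ``$\forall \, a \in Q$, [pointwise property of $a$]'': the left-hand side (stationarity with respect to $Q$) asserts, for each $a \in Q$, the existence of $\mu_1,\ldots,\mu_{\bar{\omega}} \in K^*$, not all zero, with $\sum_{j=1}^{\bar{\omega}} \nabla f^{a_j}(\bar{x})\mu_j = 0$, while \eqref{eq:stat alternative} asserts, for each $a \in Q$, that $\forall \, u \; \exists \, j : \nabla f^{a_j}(\bar{x})^\top u \notin -\Int K$. Hence it suffices to prove, for a fixed $a$, the equivalence of these two pointwise conditions, and then quantify over $a \in Q$.

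First I would rewrite both pointwise conditions in terms of the product cone $\tilde{K} = \prod_{j=1}^{\bar{\omega}} K$ and the stacked map $\tilde{f}^a$, exactly as in Lemma \ref{lem: equivalent vector problem lower} and the proof of Theorem \ref{thm:oc sopt finite}. Using $\tilde{K}^* = \prod_{j} K^*$, the left condition becomes: there exists $\mu \in \tilde{K}^* \setminus \{0\}$ with $\nabla \tilde{f}^a(\bar{x})\mu = 0$. Using $\Int \tilde{K} = \prod_{j} \Int K$, the right condition is precisely the negation of: there exists $u \in \R^n$ with $\nabla \tilde{f}^a(\bar{x})^\top u \in -\Int \tilde{K}$ (membership in $-\Int \tilde{K}$ is exactly $\nabla f^{a_j}(\bar{x})^\top u \in -\Int K$ for every $j$). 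So the pointwise claim reduces to showing that exactly one of the following alternatives holds: (A) there is $\mu \in \tilde{K}^* \setminus \{0\}$ with $\nabla \tilde{f}^a(\bar{x})\mu = 0$; (B) there is $u \in \R^n$ with $\nabla \tilde{f}^a(\bar{x})^\top u \in -\Int \tilde{K}$.

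To establish this alternative I would argue by separation. The easy direction is (A) $\Rightarrow$ not (B): if $\mu \in \tilde{K}^* \setminus \{0\}$ annihilates the range of $\nabla \tilde{f}^a(\bar{x})^\top$ and some $u$ gave $\nabla \tilde{f}^a(\bar{x})^\top u \in -\Int \tilde{K}$, then pairing yields $\mu^\top \big(\nabla \tilde{f}^a(\bar{x})^\top u\big) = 0$, contradicting the elementary fact that a nonzero functional in $\tilde{K}^*$ is strictly positive on $\Int \tilde{K}$ (which follows in one line from solidity). For the converse, not (B) $\Rightarrow$ (A): when (B) fails, the subspace $L := \{\nabla \tilde{f}^a(\bar{x})^\top u \mid u \in \R^n\}$ is disjoint from the nonempty open convex cone $-\Int \tilde{K}$, so a hyperplane separates them; since $L$ is a subspace, the separating functional $\mu$ must vanish on $L$ (equivalently $\nabla \tilde{f}^a(\bar{x})\mu = 0$) and satisfy a one-sided sign condition on $-\Int \tilde{K}$. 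Passing to the closure $\tilde{K} = \cl \Int \tilde{K}$ and adjusting the sign then places $\mu$ (up to sign) in $\tilde{K}^* \setminus \{0\}$, giving (A).

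The main obstacle — indeed the only nontrivial point — is the separation step together with its bookkeeping: one must invoke a proper separation theorem for the disjoint convex sets $L$ and $-\Int \tilde{K}$ (legitimate precisely because $-\Int \tilde{K}$ is open and nonempty, i.e. $K$ is solid), extract that the separating functional \emph{annihilates} the subspace $L$ rather than merely being bounded on it, and use $\tilde{K} = \cl \Int \tilde{K}$ to transfer the sign condition from $\Int \tilde{K}$ to all of $\tilde{K}$ so as to land in the dual cone $\tilde{K}^*$. It is worth noting that none of this uses pointedness of $K$; only closedness, convexity and solidity are invoked, and all three are inherited by $\tilde{K}$ as a finite product of copies of $K$.
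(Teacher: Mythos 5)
Your proposal is correct and takes essentially the same route as the paper's own proof: the easy direction is the same pairing/sign argument with a nonzero multiplier in $\tilde{K}^*$, and the hard direction uses the same separation of the range subspace $\left\{ \nabla \tilde{f}^a(\bar{x})^\top u \mid u \in \R^n \right\}$ from the (interior of the) product cone $\tilde{K}$, with annihilation of that subspace giving $\sum_{j=1}^{\bar{\omega}} \nabla f^{a_j}(\bar{x})\mu_j = 0$. Your Gordan-type packaging and the use of $\tilde{K} = \cl \Int \tilde{K}$ are only cosmetic variants of the paper's appeal to Eidelheit's separation theorem and its substitution of particular values of $u$ and $v_j$ into the separation inequality.
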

\begin{proof}
Suppose first that $\bar{x}$ is   stationary with respect to $Q.$ Fix now $a \in Q, u \in \R^n,$ and consider the vectors $\mu_1,\mu_2,\ldots, \mu_{\bar{\omega}} \in K^*$ that satisfy \eqref{eq: stationarity lower def}. We argue  by contradiction. Assume that

\begin{equation}\label{eq:nablaftd intk}
\forall \;j \in  [\bar{\omega}]: \nabla f^{a_j}(\bar{x})^\top u \in - \Int K.
\end{equation} From \eqref{eq:nablaftd intk}  and the fact that $\left(\mu_1,\ldots,\mu_{\bar{\omega}}\right) \in \left( \prod\limits_{j=1}^{\bar{\omega}} K^* \right) \setminus\{0\},$ we deduce that 

\begin{equation}\label{eq:aux nonzero}
\left(\mu_1^\top \left(\nabla f^{a_1}(\bar{x})^\top u\right), \ldots, \mu_j^\top \left(\nabla f^{a_j}(\bar{x})^\top u\right)  \right) \in - \R^{\bar{\omega}}_+ \setminus \{0\}.
\end{equation} Hence, we get

$$0 \overset{\eqref{eq: stationarity lower def}}{=} \left(\sum_{j=1}^{\bar{\omega}}\nabla f^{a_j}(\bar{x})\mu_j\right)^\top u =\sum_{j=1}^{\bar{\omega}} \mu_j^\top \left(\nabla f^{a_j}(\bar{x})^\top u \right) \overset{\eqref{eq:aux nonzero}}{<}0,$$ a contradiction.

Suppose now that \eqref{eq:stat alternative} holds, and fix $a \in Q.$ Consider the functional $\tilde{f}^a$ and the cone $\tilde{K}$ from Lemma \ref{lem: equivalent vector problem lower}, together with the set 

$$A:=  \left\{ \nabla \tilde{f}^a(\bar{x})^{\top} u \mid u \in \R^n \right\}.$$ Then, we deduce from \eqref{eq:stat alternative} that 

$$A \cap \Int \tilde{K} = \emptyset.$$ Applying now Eidelheit's separation theorem for convex sets \cite[Theorem 3.16]{Jahn2011}, we obtain $(\mu_1, \ldots, \mu_{\bar{\omega}}) \in \left( \prod\limits_{j=1}^{\bar{\omega}} \R^m \right) \setminus\{0\}$ such that

\begin{equation}\label{eq:ineq separation theorem}
\forall\; u \in \R^n, v_1, \ldots,v_{\bar{\omega}} \in K: \left(\sum\limits_{j=1}^{\bar{\omega}} \nabla f^{a_j}(\bar{x})\mu_j  \right)^{\top} u \leq \sum\limits_{j=1}^{\bar{\omega}} \mu_j^\top v_j.
\end{equation} By fixing $\bar{j} \in  [\bar{\omega}]$ and substituting $u = 0, v_j = 0$ for $j \neq \bar{j}$ in \eqref{eq:ineq separation theorem}, we obtain 

$$\forall\; v_{\bar{j}} \in K: \mu_{\bar{j}}^\top v_{\bar{j}}  \geq 0.$$ Hence, $\mu_{\bar{j}} \in K^*.$ Since $\bar{j}$ was chosen arbitrarily, we get that $(\mu_1, \ldots, \mu_{\bar{\omega}}) \in \left( \prod\limits_{j=1}^{\bar{\omega}} K^* \right) \setminus\{0\}.$ Define now 

$$\bar{u}:= \sum\limits_{j=1}^{\bar{\omega}} \nabla f^{a_j}(\bar{x})\mu_j .$$ Then, to finish the proof, we need to show that $\bar{u}= 0.$ In order to see this, substitute $u = \bar{u}$ and $v_j = 0$ for each $j \in  [\bar{\omega}]$ in \eqref{eq:ineq separation theorem} to obtain

$$\left\|\sum\limits_{j=1}^{\bar{\omega}} \nabla f^{a_j}(\bar{x})\mu_j \right\|^2 \leq 0.$$ Hence, it can only be $\bar{u}=0,$ and statement \eqref{eq: stationarity lower def} is true.
   
\end{proof}

\section{Descent Method and its Convergence Analysis}\label{sec:algo algo}

Now, we present the solution approach. It is clearly based on the result shown in Lemma \ref{lem: equivalent vector problem lower}. At every iteration, an element $a$ in the partition set of the current iterate point is selected, and then a descent direction for \eqref{eq: associated vector opt problem lower} will be found using ideas from \cite{ChuongYao2012, DrummondSvaiter2005}. However, one must be careful with the selection process of the element $a$ in order to guarantee convergence. Thus, we propose a specific way to achieve this. After the descent direction is determined, we follow a classical backtracking procedure of Armijo type to determine a suitable step size, and we update the iterate in the desired direction. Formally, the method is the following:

\begin{algorithm}{
\caption{{Descent Method in Set Optimization}}\label{alg:setopt desc}
\textbf{Step 0.}  Choose $x_0 \in \R^n, \beta, \nu \in (0,1), $ and set $k:=0$.\\
\textbf{Step 1.} Compute
\begin{equation*}
M_k:= \Min(F(x_k),K), \;\;  P_k:= P_{x_k}, \;\; \omega_k:= |\Min(F(x_k),K)|.
\end{equation*}\\
\textbf{Step 2.} Find 
$$(a_k, u_k) \in \underset{(a,u)\in P_k \times \R^n}{\argmin}   \max_{ j \in \;[\omega_k]} \big\{\psi_e\big(\nabla f^{a_j}(x_k)^\top d\big)\big\} +\frac{1}{2}\|u\|^2. $$ \\
\textbf{Step 3.} If $u_k=0,$ Stop. Otherwise, go to Step 4.\\
\textbf{Step 4.} Compute

$$t_k:= \max_{q\in \N \cup \{0\}}\bigg\{\nu^q \mid \; \forall \; j\in  [\omega_k]: f^{a_{k,j}}(x_k +\nu^qu_k)\preceq f^{a_{k,j}}(x_k) +\beta \nu^q\nabla f^{a_{k,j}}(x_k)^\top u_k \bigg\}.$$\\
\textbf{Step 5.} Set $x_{k+1}:= x_k +t_k u_k, \; k:= k+1$ and go to Step 2.}
\end{algorithm}

\begin{Remark}\label{rem:algo extends ds}

Algorithm  \ref{alg:setopt desc} extends the approaches proposed in \cite{ChuongYao2012, DrummondSvaiter2005} for vector optimization optimization  problems to the case \eqref{eq: SP}. The main difference is that, in Step 2, the authors use the well known Hiriart-Urruty functional and the support of a so called generator of the dual cone instead of $\psi_e$, respectively. However, in in our framework, the functional $\psi_e$ is a particular case of those employed in the other methods, see \cite[Corollary 2]{bouzaquintanatammer2019}. Thus, the equivalence in the case of vector optimization problems of the three  algorithms is obtained. 
\end{Remark}

Now, we start the convergence analysis of Algorithm \ref{alg:setopt desc}. Our first lemma describes local properties of the active indexes.

\begin{Lemma}\label{lem: indexes}

Under our assumptions, there exists a neighborhood $U$ of $\bar{x}$ such that the following properties are satisfied (some  of them under additional conditions to be established below) for every $x \in U:$
\begin{enumerate}

\item $ I_0(x)\subseteq I_0(\bar{x}),$

\item $I(x)\subseteq I(\bar{x}),$ provided that  $\Min(F(\bar{x}),K)= \WMin(F(\bar{x}),K),$

\item  $\forall \; v\in \Min(F(\bar{x}),K): \Min \left(\{f^i(x)\}_{i \in I_v(\bar{x})},K\right) \subseteq \Min(F(x),K), $

\item For every $v_1,v_2 \in \Min(F(\bar{x}),K)$ with $v_1 \neq v_2 : $

$$\Min \left(\{f^i(x)\}_{i\in I_{v_1}(\bar{x})},K\right) \cap \Min\left(\{f^i(x)\}_{i\in I_{v_2}(\bar{x})},K\right)= \emptyset,$$

\item $ \omega(x)\geq \omega(\bar{x}).$ 
 
\end{enumerate}

\end{Lemma}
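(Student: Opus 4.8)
The plan is to prove all five assertions by exploiting only the continuity of the finitely many selections $f^1,\ldots,f^p$, together with the closedness of $K$ and the openness of $\Int K$. Since only finitely many indices are involved, I may intersect finitely many neighborhoods and pass to subsequences freely; this is the recurring technical device, and the final neighborhood $U$ will be the intersection of the finitely many neighborhoods produced below.

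First I would establish (i) directly, by contraposition. If $i \notin I_0(\bar{x})$, then $f^i(\bar{x}) \notin \WMin(F(\bar{x}),K)$, so there is an index $j_i$ with $f^i(\bar{x}) - f^{j_i}(\bar{x}) \in \Int K$. Because $f^i - f^{j_i}$ is continuous and $\Int K$ is open, this relation persists on a neighborhood $U_i$ of $\bar{x}$, giving $f^i(x) \notin \WMin(F(x),K)$, i.e.\ $i \notin I_0(x)$, for all $x \in U_i$. Intersecting over the finitely many $i \notin I_0(\bar{x})$ yields a neighborhood on which (i) holds. Assertion (ii) is then immediate: a minimal element is always weakly minimal, so $I(x) \subseteq I_0(x)$; combined with (i) and the hypothesis $\Min(F(\bar{x}),K) = \WMin(F(\bar{x}),K)$ (which reads $I(\bar{x}) = I_0(\bar{x})$), one obtains $I(x) \subseteq I_0(x) \subseteq I_0(\bar{x}) = I(\bar{x})$.

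The heart of the lemma, and the step I expect to be the main obstacle, is (iii). Here I would argue by contradiction using sequences. If (iii) failed on every neighborhood, there would exist $x_k \to \bar{x}$ and, for each $k$, indices $i(k) \in I_v(\bar{x})$ and $j(k) \in [p]$ with $f^{i(k)}(x_k) \in \Min(\{f^i(x_k)\}_{i \in I_v(\bar{x})},K)$ yet $f^{j(k)}(x_k) \preceq f^{i(k)}(x_k)$ and $f^{j(k)}(x_k) \neq f^{i(k)}(x_k)$. Passing to a subsequence I may take $i(k) \equiv i_0$ and $j(k) \equiv j$ constant; minimality within the subfamily forces $j \notin I_v(\bar{x})$. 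Letting $k \to \infty$ in $f^{i_0}(x_k) - f^{j}(x_k) \in K$ and using that $K$ is closed gives $f^{j}(\bar{x}) \preceq v$, and since $v \in \Min(F(\bar{x}),K)$ this forces $f^{j}(\bar{x}) = v$, hence $j \in I_v(\bar{x})$, a contradiction. This is the delicate part because it is precisely where the local stability of the ``minimal structure'' is captured, requiring the interplay of the closedness of $K$, the finiteness of $[p]$, and the definition of $I_v$.

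Assertions (iv) and (v) then follow comfortably. For (iv) I would again use a sequential contradiction: a common point of the two minimal sets would yield $f^{i_1}(x_k) = f^{i_2}(x_k)$ with $i_1 \in I_{v_1}(\bar{x})$ and $i_2 \in I_{v_2}(\bar{x})$ (constant along a subsequence), and passing to the limit gives $v_1 = f^{i_1}(\bar{x}) = f^{i_2}(\bar{x}) = v_2$, contradicting $v_1 \neq v_2$. Finally, for (v), each set $\{f^i(x)\}_{i \in I_{v_j}(\bar{x})}$ is nonempty and finite, hence compact, so by the domination property (Proposition~\ref{prop:domination property}) its set of minimal elements is nonempty; by (iii) each such set is contained in $\Min(F(x),K)$, and by (iv) these $\bar{\omega}$ sets are pairwise disjoint. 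Counting the resulting pairwise disjoint nonempty subsets of $\Min(F(x),K)$ gives $\omega(x) \geq \bar{\omega} = \omega(\bar{x})$.
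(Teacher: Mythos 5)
Your proposal is correct and follows essentially the same route as the paper: finiteness-based subsequence (or neighborhood-intersection) arguments combined with continuity of the $f^i$, closedness of $K$, and openness of $\Int K$, with the same chain of dependencies (i)$\Rightarrow$(ii) and (iii)+(iv)$\Rightarrow$(v). The only notable (and harmless) deviations are that you prove (i) by contrapositive open-set persistence rather than by sequences, and in (iii) you extract the distinct dominating element directly from the definition of non-minimality instead of invoking the domination property of Proposition \ref{prop:domination property}, which slightly streamlines the paper's argument without changing its substance.
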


\begin{proof}
It  suffices to show the existence of the neighborhood $U$ for each item independently, as we could later take the intersection of them to satisfy all the properties.

$(i)$ Assume that this is not satisfied in any neighborhood $U$ of $\bar{x}.$ Then, we could find a sequence $\{x_k\}_{k\geq 1} \subset \R^n$ such that $x_k \to \bar{x}$ and 

\begin{equation}\label{eq:difference aux}
\forall \; k \in \N: I_0(x_k) \setminus I_0(\bar{x}) \neq \emptyset.
\end{equation} Because of the finite cardinality of all possible differences in \eqref{eq:difference aux}, we can assume without loss of generality that there exists a common $\bar{i}\in [p]$ such that

\begin{equation}\label{eq:common index aux}
\forall \; k\in \N: \bar{i} \in I_0(x_k) \setminus I_0(\bar{x}).
\end{equation} In particular, \eqref{eq:common index aux} implies that $\bar{i}\in I_0(x_k).$ Hence, we get 

$$\forall\; k \in \N,\;  i \in [p]:\;f^{i}(x_k) - f^{\bar{i}}(x_k) \in - \left(\R^m\setminus \Int K\right).$$  Since $\R^m\setminus \Int K$ is closed, taking the limit when $k \to +\infty$ we obtain 

$$\forall\; i \in [p]:\;f^i(\bar{x}) - f^{\bar{i}}(\bar{x}) \in - \left(\R^m\setminus \Int K\right).$$ Hence, we deduce that $f^{\bar{i}}(\bar{x}) \in \WMin(F(\bar{x}),K)$ and $\bar{i} \in I_0(\bar{x}),$ a contradiction to \eqref{eq:difference aux}. 

$(ii)$ Consider the same neighborhood $U$ on which statement $(i)$ holds. Note that, under the given assumption, we have $I_0(\bar{x}) = I(\bar{x}).$ This, together with statement $(i),$ implies:

$$\forall\; x\in U:\; I(x) \subseteq I_0(x) \subseteq I_0(\bar{x}) = I(\bar{x}).$$

$(iii)$ For this statement, it is also sufficient to show that the neighborhood $U$ can be chosen for any point in the set $ \Min(F(\bar{x}),K).$ Hence, fix  $v \in \Min(F(\bar{x}),K)$  and assume that there is no neighborhood $U$ of $\bar{x}$ on which the statement is satisfied. Then, we could find  sequences $\{x_k\}_{k\geq 1}\subset \R^n $ and $\{i_k\}_{k\geq 1} \subseteq I_v(\bar{x})$  such that $x_k \to \bar{x}$ and

\begin{equation}\label{eq:min/min1}
\forall \; k \in \N: f^{i_k}(x_k)  \in \Min(\{f^i(x_k)\}_{i \in I_v(\bar{x})},K) \setminus \Min(F(x_k),K).
\end{equation} Since $I_v(\bar{x})$ is finite, we deduce that there is only a finite number of different elements in the sequence $\{i_k\}.$ Hence, we can assume without loss of generality that there exists $\bar{i} \in I_v(\bar{x})$ such that $i_k = \bar{i}$ for every $k \in \N.$ Then, \eqref{eq:min/min1}, is equivalent to
\begin{equation}\label{eq:min/min2}
\forall \; k \in \N: f^{\bar{i}}(x_k)  \in \Min(\{f^i(x_k)\}_{i \in I_v(\bar{x})},K) \setminus \Min(F(x_k),K).
\end{equation} From \eqref{eq:min/min2},  we get in particular that $f^{\bar{i}}(x_k)  \notin \Min(F(x_k),K)$ for every $k \in \N.$ This, together with the domination property in Proposition \ref{prop:domination property} and the fact that the sets $I(x_k)$ are contained in the finite set $[p]$, allow us to obtain without loss of generality the existence of $\tilde{i} \in I(\bar{x})$ such that 

\begin{equation}\label{eq: lemma (ii)}
\forall\; k\in \N:\; f^{\tilde{i}}(x_k)\preceq f^{\bar{i}}(x_k), \; f^{\tilde{i}}(x_k) \neq f^{\bar{i}}(x_k).
\end{equation}

Now, taking the limit in \eqref{eq: lemma (ii)} when $k \to +\infty,$ we obtain $f^{\tilde{i}}(\bar{x})\preceq f^{\bar{i}}(\bar{x}) = v.$ Since $v$ is a minimal element of $F(\bar{x})$, it can only be $f^{\tilde{i}}(\bar{x})= v$ and, hence, $\tilde{i} \in I_v(\bar{x}).$ From this, the first inequality in \eqref{eq: lemma (ii)}, and the fact that $f^{\bar{i}}(x_k) \in \Min(\{f^i(x_k)\}_{i \in I_v(\bar{x})},K)$ for every $k \in \N,$ we get that $f^{\bar{i}}(x_k) = f^{\tilde{i}}(x_k)$ for all $k\in \N.$ This contradicts the second part of \eqref{eq: lemma (ii)}, and hence our statement is true.

$(iv)$ It follows directly from the continuity of the functionals $f^i, \; i \in [p].$

$(v)$ The statement is an immediate consequence of $(iii)$ and $(iv).$
\end{proof}

For the main convergence theorem of our method, we will need the notion of regularity  of a point for a set-valued mapping. 

\begin{Definition}\label{def:regular point}
We say that $\bar{x}$ is a regular point of $F$ if the following conditions are satisfied:

\begin{enumerate}
\item $\Min(F(\bar{x}),K)= \WMin(F(\bar{x}),K),$

\item the functional $\omega$ is constant in a neighborhood of $\bar{x}.$
\end{enumerate}
\end{Definition}
\begin{Remark}

Since we will analyze the stationarity of the regular limit points of the sequence generated by Algorithm \ref{alg:setopt desc}, the following points must be addressed: 

\begin{itemize}

\item Notice that, by definition, the regularity property of a point is independent of our optimality concept. Thus, by only knowing that a point is regular, we can not infer anything about whether it is optimal or not.

\item The concept of regularity seems to be linked to the complexity of comparing sets in a high dimensional space. For example, in case  $m=1$ or $p = 1,$ every point in $\R^n$ is regular for the set-valued mapping $F$. Indeed, in these cases, we have $\omega(x) = 1$ and

\[ 
\Min(F(x),K)= \WMin(F(x),K) = \left\{
\begin{array}{ll}
      \left\{\min\limits_{i \in \; [p]} f^i(x)\right\} & \textrm{if } m=1,  \\
      \{f^1(x)\} & \textrm{if } p=1\\
\end{array} 
\right. 
\] for all $x\in \R^n.$

\end{itemize}
\end{Remark}

A natural question  is whether regularity is a strong assumption to impose on a point. In that sense,  given the finite structure of the sets $F(x)$, the condition $(i)$ in Definition \ref{def:regular point} seems to be very reasonable. In fact, we would expect that, for most practical cases, this condition is fulfilled at almost every point. For condition $(ii)$, a formalized statement is derived in Proposition \ref{prop:dense subset} below.

\begin{Proposition} \label{prop:dense subset}

The set 

$$S: = \{x \in \R^n \mid  \omega \textrm{ is locally constant at } x\}$$ is open and dense in $\R^n.$
\end{Proposition}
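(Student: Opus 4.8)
The plan is to treat openness and density separately. Openness is essentially tautological: if $x \in S$, then by definition $\omega$ is constant on some open neighborhood $V$ of $x$; but then for every $y \in V$ the same $V$ (or any smaller ball around $y$ contained in $V$) witnesses that $\omega$ is locally constant at $y$, so $V \subseteq S$. Hence every point of $S$ is interior to $S$, and $S$ is open.

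For density, I would fix an arbitrary nonempty open ball $B \subseteq \R^n$ and produce a point of $S$ inside it. Two facts drive the argument. First, $\omega$ takes values in the finite set $[p]$, since $F(x)$ consists of at most $p$ points and hence $1 \leq \omega(x) \leq p$ for all $x$. Second, Lemma \ref{lem: indexes}$(v)$ asserts that for each $\bar{x}$ there is a neighborhood $U$ of $\bar{x}$ with $\omega(x) \geq \omega(\bar{x})$ on $U$; that is, $\omega$ is lower semicontinuous. Now let $M := \max_{x \in B} \omega(x)$, which exists because $\omega(B)$ is a nonempty subset of $\{1,\ldots,p\}$, and choose $x^{*} \in B$ attaining it, so that $\omega(x^{*}) = M$ and $\omega(x) \leq M$ for all $x \in B$. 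Applying Lemma \ref{lem: indexes}$(v)$ at $x^{*}$ gives a neighborhood $U$ on which $\omega(x) \geq \omega(x^{*}) = M$; shrinking $U$ if necessary, I may assume $U \subseteq B$. Then for every $x \in U$ the bound $\omega(x) \leq M$ (from $U \subseteq B$ and maximality) combines with $\omega(x) \geq M$ to force $\omega(x) = M$. Thus $\omega \equiv M$ on $U$, so $\omega$ is locally constant at $x^{*}$, giving $x^{*} \in S \cap B$ and proving density.

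I expect no deep obstacle here; the whole content is the interplay between the upper bound $\omega \leq M$ coming from the maximality of $M$ over $B$ and the lower bound $\omega \geq M$ coming from Lemma \ref{lem: indexes}$(v)$. The two points that must be handled carefully are ensuring the neighborhood produced by Lemma \ref{lem: indexes}$(v)$ is taken inside $B$ so that the maximality bound still applies, and invoking the boundedness $\omega \leq p$ to guarantee that the maximum $M$ is actually attained rather than merely a supremum.
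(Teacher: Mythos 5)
Your proof is correct, and it relies on exactly the same two ingredients as the paper: the lower semicontinuity of $\omega$ from Lemma \ref{lem: indexes}~$(v)$, and the fact that $\omega$ takes values in the finite integer set $[p]$, so maxima over nonempty sets are attained. The difference is in the logical packaging. The paper argues by contradiction: assuming $S$ is not dense, it takes the maximum $p_0$ of $\omega$ over the nonempty open set $\R^n \setminus \cl S$, notes that the sublevel set $A = \{x \in \R^n \mid \omega(x) \leq p_0 - \tfrac{1}{2}\}$ is closed because $\omega$ is lower semicontinuous, and thereby produces a nonempty open subset of $\R^n \setminus \cl S$ on which $\omega \equiv p_0$, contradicting that no point outside $\cl S$ is a point of local constancy. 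You run the same mechanism directly: inside an arbitrary ball $B$ you take a maximizer $x^{*}$ of $\omega$, and lower semicontinuity plus maximality over $B$ force $\omega$ to be constant on a neighborhood of $x^{*}$, so $x^{*} \in S \cap B$. Your version is slightly more elementary, since it dispenses with the closure $\cl S$, the contradiction, and the closed-sublevel-set step, and it isolates the underlying point cleanly: a bounded, integer-valued, lower semicontinuous function is locally constant at any local maximizer, and such maximizers exist in every ball. The paper proves the same statement with the same tools, just organized around the complement of $\cl S$ rather than an arbitrary ball.
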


\begin{proof}
$(i)$ The openness is trivial. Suppose now that $S$ is not dense in $\R^n.$ Then, $\R^n\setminus (\cl S)$ is nonempty and  open. Furthermore, since $\omega$ is bounded above, the real number 

$$p_0 := \max_{x \in \R^n\setminus (\cl S) } \omega(x)$$  is well defined. Consider the set 

$$A:= \left\{x \in \R^n \mid \omega(x)\leq p_0-\frac{1}{2}\right\}.$$ From Lemma \ref{lem: indexes} $(v)$, it follows that $\omega$ is lower semicontinuous. Hence, $A$ is closed as it is the sublevel set of a lower semicontinuous functional, see \cite[Lemma 1.7.2]{Schirotzek2007}. Consider now the set 

$$U:= \left(\R^n\setminus (\cl S)\right)\cap \left(\R^n \setminus A\right).$$ Then, $U$ is a nonempty open subset of $\R^n\setminus (\cl S).$ This, together with the definition of $A,$ gives us $\omega(x) = p_0$ for every $x \in U.$ However, this contradicts the fact that $\omega$ is not locally constant at any point of $\R^n\setminus (\cl S).$ Hence, $S$ is dense in $\R^n.$

\end{proof}

An essential property of regular points of a set-valued mapping is described in the next lemma.

\begin{Lemma}\label{prop:regularity property}

Suppose that $\bar{x}$ is a regular point of $F.$ Then, there exists a neighborhood $U$ of $\bar{x}$ such that the following properties  hold  for every $x \in U$:

\begin{enumerate}
\item $\omega(x) = \bar{\omega},$

\item there is an enumeration $\{w^x_1, \ldots ,w^x_{\bar{\omega}}\}$ of $\Min(F(x),K)$ such that 

$$\forall \; j \in [\bar{\omega}]: I_{w^x_j}(x) \subseteq I_{v^{\bar{x}}_j}(\bar{x}).$$ 
\end{enumerate}

In particular, without loss of generality, we have $P_x\subseteq P_{\bar{x}}$ for every $x \in U.$
   
\end{Lemma}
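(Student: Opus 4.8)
The plan is to establish the two enumerated properties separately and then derive the inclusion $P_x \subseteq P_{\bar{x}}$ as a consequence. For property $(i)$, I would invoke the regularity of $\bar{x}$: condition $(ii)$ in \Cref{def:regular point} states exactly that $\omega$ is constant in some neighborhood of $\bar{x}$, so there is a neighborhood $U_1$ of $\bar{x}$ with $\omega(x) = \omega(\bar{x}) = \bar{\omega}$ for every $x \in U_1$. This is essentially immediate from the definition and requires no real work.

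The substantive part is property $(ii)$, and here the idea is to combine the local structural results from \Cref{lem: indexes} with the count provided by $(i)$. First I would apply \Cref{lem: indexes} on a neighborhood $U_2$ where items $(iii)$, $(iv)$ and $(v)$ all hold. By item $(iii)$, for each $v^{\bar{x}}_j \in \Min(F(\bar{x}),K)$ the set $\Min(\{f^i(x)\}_{i \in I_{v^{\bar{x}}_j}(\bar{x})},K)$ is a nonempty subset of $\Min(F(x),K)$; by item $(iv)$ the sets obtained from distinct indices $j$ are pairwise disjoint. Since regularity also gives condition $(i)$ of \Cref{def:regular point}, namely $\Min(F(\bar{x}),K) = \WMin(F(\bar{x}),K)$, I can additionally invoke \Cref{lem: indexes} $(ii)$ on a neighborhood $U_3$ to get $I(x) \subseteq I(\bar{x})$, which via \eqref{eq:partition of I} means every minimal element of $F(x)$ is attained by some $f^i$ with $i \in I(\bar{x}) = \bigcup_{j} I_{v^{\bar{x}}_j}(\bar{x})$. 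The key counting argument is then: the $\bar{\omega}$ pairwise disjoint nonempty sets $\Min(\{f^i(x)\}_{i \in I_{v^{\bar{x}}_j}(\bar{x})},K)$ for $j \in [\bar{\omega}]$ are all contained in $\Min(F(x),K)$, which by $(i)$ has exactly $\bar{\omega}$ elements; hence each of these sets must be a singleton and together they exhaust $\Min(F(x),K)$. Labelling the unique element of the $j$-th set as $w^x_j$ produces the desired enumeration $\{w^x_1,\ldots,w^x_{\bar{\omega}}\}$ of $\Min(F(x),K)$.

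With the enumeration in hand, the inclusion $I_{w^x_j}(x) \subseteq I_{v^{\bar{x}}_j}(\bar{x})$ should follow by tracing definitions. An index $i \in I_{w^x_j}(x)$ satisfies $f^i(x) = w^x_j$ and $i \in I(x) \subseteq I(\bar{x})$; I would argue that such $i$ must lie in $I_{v^{\bar{x}}_j}(\bar{x})$ rather than in $I_{v^{\bar{x}}_{j'}}(\bar{x})$ for $j' \neq j$, using that $w^x_j$ belongs to the $j$-th block $\Min(\{f^i(x)\}_{i \in I_{v^{\bar{x}}_j}(\bar{x})},K)$ together with the disjointness from item $(iv)$, so that $f^i(x) = w^x_j$ forces $i$ into the correct block. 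Finally, taking $U := U_1 \cap U_2 \cap U_3$ and recalling that $P_x = \prod_{j=1}^{\omega(x)} I_{w^x_j}(x)$ while $P_{\bar{x}} = \prod_{j=1}^{\bar{\omega}} I_{v^{\bar{x}}_j}(\bar{x})$, the componentwise inclusions yield $P_x \subseteq P_{\bar{x}}$, where the phrase ``without loss of generality'' accounts for fixing the enumeration of $\Min(F(x),K)$ to be the one just constructed.

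The main obstacle I anticipate is the counting step in property $(ii)$: making rigorous that the $\bar{\omega}$ disjoint nonempty blocks must each be singletons and must cover all of $\Min(F(x),K)$. This relies essentially on the equality $\omega(x) = \bar{\omega}$ from regularity, which is precisely why condition $(ii)$ of \Cref{def:regular point} is needed — without it, \Cref{lem: indexes} $(v)$ only gives $\omega(x) \geq \bar{\omega}$ and the blocks could fail to exhaust $\Min(F(x),K)$ or could be larger than singletons. I would also need to be careful that $\Min(F(\bar{x}),K) = \WMin(F(\bar{x}),K)$ is genuinely used, since it is what licenses the application of \Cref{lem: indexes} $(ii)$ and hence the containment $I(x) \subseteq I(\bar{x})$ that keeps every minimal element of $F(x)$ within the union of the blocks.
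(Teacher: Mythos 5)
Your proposal is correct and follows essentially the same route as the paper's proof: property $(i)$ directly from regularity, the counting argument with the disjoint nonempty blocks $\Min\left(\{f^i(x)\}_{i\in I_{v^{\bar{x}}_j}(\bar{x})},K\right)$ inside the $\bar{\omega}$-element set $\Min(F(x),K)$ to produce the singleton enumeration, and then $I(x)\subseteq I(\bar{x})$ plus block disjointness to pin each index into the correct block. The only cosmetic difference is that the paper closes the last step by invoking the domination property to get $w^x_{j'}\preceq w^x_j$ and then minimality to force $w^x_{j'}=w^x_j$, whereas you implicitly use that a minimal element of $F(x)$ remains minimal in any subset containing it; both one-line completions are valid.
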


\begin{proof} Let $U$ be the neighborhood of $\bar{x}$ from Lemma \ref{lem: indexes}. Since $\bar{x}$ is a regular point of $F,$ we assume without loss of generality that $\omega$ is constant on $U.$ Hence, property $(i)$ is  fulfilled. Fix now $x \in U$ and consider the enumeration $\{v^{\bar{x}}_1,\ldots, v^{\bar{x}}_{\bar{\omega}}\}$ of $\Min(F(\bar{x}),K).$ Then, from properties $(iii)$ and $(iv)$ in Lemma \ref{lem: indexes} and the fact that $\omega(x)= \bar{\omega},$  we deduce that 

\begin{equation}\label{eq:card 1}
\forall\; j\in [\bar{\omega}]:\; \left|\Min\left(\{f^i(x)\}_{i\in I_{v^{\bar{x}}_j}(\bar{x})},K\right)\right| =1.
\end{equation} Next, for  $j \in [\bar{\omega}],$ we define $w^x_j$ as the unique element of the set $\Min\left(\{f^i(x)\}_{i\in I_{v^{\bar{x}}_j}(\bar{x})},K\right).$ Then, from \eqref{eq:card 1}, property $(iii)$ in Lemma \ref{lem: indexes} and the fact that $\omega$ is constant on $U,$ we obtain that $\{w^x_1,\ldots, w^x_{\bar{\omega}}\}$ is an enumeration of the set $\Min(F(x),K).$ 

It remains to show now that this enumeration satisfies $(ii).$ In order to see this, fix $j \in  [\bar{\omega}]$ and $\bar{i} \in I_{w^x_j}(x).$ Then, from the regularity of $\bar{x}$ and property $(ii)$ in Lemma \ref{lem: indexes}, we get that $I(x)\subseteq I(\bar{x}).$ In particular, this implies  $\bar{i} \in I(\bar{x}).$ From this and \eqref{eq:partition of I}, we have the existence of $j' \in  [\bar{\omega}]$ such that $\bar{i} \in I_{v^{\bar{x}}_{j'}}(\bar{x}).$ Hence, we deduce that 

\begin{equation}\label{eq:w in other minimal}
w^x_j = f^{\bar{i}}(x) \in \left\{f^i(x)\right\}_{i \in I_{v^{\bar{x}}_{j'}}(\bar{x})}.
\end{equation} Then, from \eqref{eq:card 1}, \eqref{eq:w in other minimal} and the definition of  $w^x_{j'},$ we find that $w^x_{j'}\preceq w^x_j.$ Moreover, because $w^x_{j'}, w^x_j \in \Min(F(x),K),$ it can only be $w^x_{j'}= w^x_j.$ Thus, it follows that $j = j',$ since $\{w^x_1,\ldots, w^x_{\bar{\omega}}\}$ is an enumeration of the set $\Min(F(x),K).$ This shows that $\bar{i} \in I_{v^{\bar{x}}_j}(\bar{x}),$ as desired.

\end{proof}

For the rest of the analysis we need to introduce the parametric family of functionals $\{\varphi_x\}_{x \in \R^n},$ whose elements  $\varphi_x: P_x\times \R^n \to \R$ are defined as follows: 

\begin{equation}\label{eq:varphi_x}
\forall\; a \in P_x,u \in \R^n: \varphi_x(a,u):=\max_{ j \in \;[\omega(x)]} \big\{\psi_e\big(\nabla f^{a_j}(x)^\top u\big)\big\} +\frac{1}{2}\|u\|^2.
\end{equation} It is easy to see that, for every $x \in \R^n$ and $a \in P_x,$ the functional $\varphi_x(a, \cdot)$ is strongly convex in $\R^n,$ that is, there exists a constant $\alpha >0$ such that  the inequality

$$ \varphi_x\left(a,tu + (1-t)u'\right) + \alpha t(1-t)\|u-u'\|^2 \leq t \varphi_x(a,u) + (1-t)\varphi_x\left(a,u'\right)$$ is satisfied for every $u, u'\in \R^n$ and $t \in [0,1].$  According to \cite[Lemma 3.9]{kanzow1999u}, the functional $\varphi_x(a,\cdot)$ attains its minimum over $\R^n,$ and this minimum is unique. In particular, we can check that 
\begin{equation}\label{eq:min varphi leq 0}
\forall \; x \in \R^n, a \in P_x : \min_{u \in \R^n} \varphi_x(a,u) \leq 0
\end{equation} and that, if $u_a \in \R^n$ is such that $\varphi_x(a,u_a) = \min\limits_{u \in \R^n} \varphi_x(a,u),$  then

\begin{equation}\label{eq:var = 0}
\varphi_x(a,u_a) = 0 \Longleftrightarrow u_a = 0.
\end{equation} Taking into account that $P_x$ is finite, we also obtain that $\varphi_x$ attains its minimum over the set  $P_x\times \R^n.$  Hence, we can consider the functional $\phi: \R^n \rightarrow \R$ given by 

\begin{equation}\label{eq:phi functional}
\phi(x):= \min_{(a,u)\in P_x \times \R^n} \varphi_x(a,u).
\end{equation} Then, because of \eqref{eq:min varphi leq 0}, it can be verified that 

\begin{equation}\label{eq:phi <=0}
\forall\; x \in  \R^n: \phi(x)\leq 0.
\end{equation}
Furthermore, if $(a,u) \in P_x\times \R^n$ is such that $\phi(x) = \varphi_x(a,u),$ it follows from \eqref{eq:var = 0} (see also \cite{DrummondSvaiter2005}) that

\begin{equation}\label{eq: phi = 0 equivalence}
\phi(x) = 0 \Longleftrightarrow u = 0.
\end{equation}

In the  following two propositions we  show that Algorithm \ref{alg:setopt desc} is well defined. We start by proving that, if Algorithm \ref{alg:setopt desc} stops in Step 3, a   stationary point was found.

\begin{Proposition}\label{prop: charact stat}
Consider the functionals $\varphi_{\bar{x}}$ and $\phi$ given in \eqref{eq:varphi_x} and \eqref{eq:phi functional}, respectively. Furthermore, let  $(\bar{a},\bar{u})\in P_{\bar{x}} \times \R^n$ be such that $\phi(\bar{x}) =  \varphi_{\bar{x}}(\bar{a},\bar{u}).$ Then, the following statements are equivalent:

\begin{enumerate}

\item $\bar{x}$ is a strongly    stationary point of \eqref{eq: SP},

\item $\phi(\bar{x})=0,$

\item $\bar{u}=0.$
\end{enumerate}

\end{Proposition}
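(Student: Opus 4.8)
The plan is to establish the three-way equivalence by first disposing of the cheap equivalence and then linking strong stationarity to the value $\phi(\bar{x})$ through the alternative characterization \eqref{eq:stat alternative}. The equivalence of $(ii)$ and $(iii)$ is immediate: since $\phi(\bar{x})=\varphi_{\bar{x}}(\bar{a},\bar{u})$ by hypothesis and $\bar{u}$ minimizes $\varphi_{\bar{x}}(\bar{a},\cdot)$ over $\R^n$, the equivalence \eqref{eq: phi = 0 equivalence} gives $\phi(\bar{x})=0\Leftrightarrow\bar{u}=0$ directly. So the entire content is the equivalence of $(i)$ and $(ii)$.

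To handle that, I would work with the positively homogeneous functions $g_a(u):=\max_{j\in[\bar{\omega}]}\psi_e\big(\nabla f^{a_j}(\bar{x})^\top u\big)$, so that $\varphi_{\bar{x}}(a,u)=g_a(u)+\tfrac{1}{2}\|u\|^2$. Two facts about $\psi_e$ from Proposition \ref{prop: properties of tammer function} are the workhorses: by sublinearity (item \ref{item:sublinearity and lipschitz of scalarization}) each map $u\mapsto\psi_e(\nabla f^{a_j}(\bar{x})^\top u)$ is positively homogeneous and vanishes at $u=0$, hence so does $g_a$; and by representability (item \ref{item:representability of scalarization}) the condition $\nabla f^{a_j}(\bar{x})^\top u\notin-\Int K$ appearing in \eqref{eq:stat alternative} is equivalent to $\psi_e(\nabla f^{a_j}(\bar{x})^\top u)\geq 0$. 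Consequently, strong stationarity, i.e.\ the case $Q=P_{\bar{x}}$ in \eqref{eq:stat alternative}, translates exactly into the statement that $g_a(u)\geq 0$ for every $a\in P_{\bar{x}}$ and every $u\in\R^n$.

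With this reformulation, $(i)\Rightarrow(ii)$ is short: assuming $g_a\geq 0$ on $\R^n$ for all $a$, we get $\varphi_{\bar{x}}(a,u)\geq\tfrac{1}{2}\|u\|^2\geq 0$ for all $(a,u)$, while $\varphi_{\bar{x}}(a,0)=g_a(0)=0$; hence $\min_{u}\varphi_{\bar{x}}(a,u)=0$ for each $a$, and minimizing over the finite set $P_{\bar{x}}$ yields $\phi(\bar{x})=0$. The converse $(ii)\Rightarrow(i)$ is where the real work lies. From $\phi(\bar{x})=0$ together with $\varphi_{\bar{x}}(a,0)=0$ I would first peel off a per-$a$ condition via the chain $\phi(\bar{x})\leq\min_u\varphi_{\bar{x}}(a,u)\leq\varphi_{\bar{x}}(a,0)=0$, forcing $\min_u\varphi_{\bar{x}}(a,u)=0$ for every $a\in P_{\bar{x}}$. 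It then remains to upgrade this to $g_a(u)\geq 0$ for all $u$, which I would prove by contradiction and scaling: if $g_a(u_0)<0$ for some $u_0$, then positive homogeneity of $g_a$ gives $\varphi_{\bar{x}}(a,tu_0)=t\,g_a(u_0)+\tfrac{t^2}{2}\|u_0\|^2$ for $t>0$, and for small $t$ the negative linear term $t\,g_a(u_0)$ dominates the quadratic term, so $\varphi_{\bar{x}}(a,tu_0)<0$, contradicting $\min_u\varphi_{\bar{x}}(a,u)=0$.

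The only delicate point is this final scaling argument, namely correctly extracting the individual minimality conditions from the single global value $\phi(\bar{x})$ and verifying that the linear term controls the quadratic one near $t=0$; everything else is routine bookkeeping built on \eqref{eq: phi = 0 equivalence}, \eqref{eq:stat alternative}, and the sublinearity and representability of $\psi_e$.
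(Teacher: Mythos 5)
Your proof is correct, but it follows a genuinely different route from the paper's. The paper proves the equivalence of $(i)$ and $(ii)$ by passing to the product space: it forms the cone $\tilde{K}$ from \eqref{eq:tildeK}, the stacked maps $\tilde{f}^a$, and the scalarization $\psi_{\tilde{e}}$, shows $\psi_{\tilde{e}}$ coincides with $\max_{j}\psi_e(\cdot)$ and is a Hiriart-Urruty functional, and then invokes an external result (Proposition 2.2 of Chuong--Yao) stating that stationarity of $\bar{x}$ for each vector problem \eqref{eq: associated vector opt problem lower} is equivalent to $\min_u \psi_{\tilde{e}}\bigl(\nabla\tilde{f}^a(\bar{x})^\top u\bigr)+\tfrac{1}{2}\|u\|^2=0$; the equivalence with $(iii)$ then follows from \eqref{eq: phi = 0 equivalence}, exactly as in your argument. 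You instead bypass the product-space construction and the external citation entirely: you convert strong stationarity into the condition $g_a\geq 0$ on $\R^n$ for all $a\in P_{\bar{x}}$ via the paper's own separation-based characterization \eqref{eq:stat alternative} together with the representability property of $\psi_e$, and then close the loop with an elementary positive-homogeneity/scaling argument (the observation that $\varphi_{\bar{x}}(a,tu_0)=t\,g_a(u_0)+\tfrac{t^2}{2}\|u_0\|^2<0$ for small $t>0$ whenever $g_a(u_0)<0$, which is sound since $g_a(u_0)<0$ forces $u_0\neq 0$). What your approach buys is self-containedness and transparency: the only nontrivial input is a result already proved in the paper, and the argument generalizes verbatim to arbitrary $Q\subseteq P_{\bar{x}}$, so it yields Remark \ref{rem:q stat equivalence} with no extra work. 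What the paper's approach buys is the explicit bridge between $\varphi_{\bar{x}}$ and the standard vector-optimization machinery built on Lemma \ref{lem: equivalent vector problem lower}, which keeps the descent method aligned with the existing literature it extends; the intermediate equivalence \eqref{eq:stat equivalence} is also of independent use there.
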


\begin{proof}
The result will be a consequence of \cite[Proposition 2.2]{ChuongYao2012} where, using an Hiriart- Urruty functional, a similar statement is proved for vector optimization problems. Consider the cone $\tilde{K}$ given by \eqref{eq:tildeK} , the vector $\tilde{e}: = \begin{pmatrix}
e \\ \vdots \\e
\end{pmatrix}  \in \Int \tilde{K},$ and the scalarizing functional $\psi_{\tilde{e}}$ associated to $\tilde{e}$ and $\tilde{K},$ see Definition \ref{def:funcGW}. Then, for any  $v_1,\ldots,v_{\bar{\omega}} \in \R^m$ and  $v:= \begin{pmatrix}
v_1 \\ \vdots \\ v_{\bar{\omega}}
\end{pmatrix},$ we get

\begin{equation}\label{eq:new tammer func in product space}
\begin{aligned}
\psi_{\tilde{e}}(v)&= \min\{t \in \R \mid t\tilde{e} \in v + \tilde{K} \}\\
                   & = \min\{t \in \R \mid \forall\; j\in  [\bar{\omega}]: te \in v_j + K\}\\
                   & =  \max_{ j \in \;[\bar{\omega}]}\psi_e(v_j).
\end{aligned}
\end{equation} From \cite[Theorem 4]{bouzaquintanatammer2019}, we know that $\psi_{\tilde{e}}$ is an Hiriart-Urruty functional. Hence, for a fixed $a \in P_{\bar{x}}$ we can apply \cite[Proposition 2.2]{ChuongYao2012} to  \eqref{eq: associated vector opt problem lower} to obtain that

\begin{equation}\label{eq:stat equivalence}
\bar{x} \textrm{ is a stationary point of \eqref{eq: associated vector opt problem lower}}  \Longleftrightarrow \min\limits_{u \in \R^n} \;\psi_{\tilde{e}}\left(\nabla \tilde{f}^a(\bar{x})^\top u\right)  + \frac{1}{2}\|u\|^2 = 0
\end{equation} Thus, we deduce that

\begin{eqnarray*}
\bar{x} \textrm{ is strongly    stationary} & \overset{\textrm{ (Remark \ref{rem:weak stat is stat for vp}) }}{\Longleftrightarrow}   & \forall\; a\in P_{\bar{x}}: \bar{x} \textrm{ is stationary for \eqref{eq: associated vector opt problem lower}}\\
           &  \overset{ \eqref{eq:stat equivalence}}{\Longleftrightarrow} & \forall\; a\in P_{\bar{x}}: \min\limits_{u \in \R^n} \;\psi_{\tilde{e}}\left(\nabla \tilde{f}^a(\bar{x})^\top u\right)  + \frac{1}{2}\|u\|^2 = 0\\ 
           & \overset{( \eqref{eq:varphi_x} \; + \;\eqref{eq:new tammer func in product space})}{\Longleftrightarrow} & \forall\; a\in P_{\bar{x}}: \min\limits_{u \in \R^n}\varphi_{\bar{x}}(a,u) = 0\\
           & \Longleftrightarrow & \min\limits_{(a,u) \in P_{\bar{x}}\times \R^n}  \varphi_{\bar{x}}(a,u) = 0\\
           & \overset{\eqref{eq:phi functional}}{\Longleftrightarrow} & \phi(\bar{x}) = 0\\
           & \overset{\eqref{eq: phi = 0 equivalence}}{\Longleftrightarrow} &  \bar{u} = 0,
\end{eqnarray*} as desired.

\end{proof}

\begin{Remark}\label{rem:q stat equivalence}
A similar statement to the one in Proposition  \ref{prop: charact stat} can be made for stationary points of \eqref{eq: SP}. Indeed, for a set $Q \subseteq P_{\bar{x}},$ consider a point $\left(\bar{a}_Q,\bar{u}_Q\right) \in Q \times \R^n$ such that $\varphi_{\bar{x}}\left(\bar{a}_Q,\bar{u}_Q\right) = \min\limits_{(a,u) \in Q \times \R^n} \varphi_{\bar{x}}(a,u).$ Then, by replacing $P_{\bar{x}}$ by $Q$ in the proof of Proposition \ref{prop: charact stat}, we can show that the following statements are equivalent:

\begin{enumerate}

\item $\bar{x}$ is   stationary for \eqref{eq: SP} with respect to $Q,$
\item $\min\limits_{(a,u) \in Q \times \R^n} \varphi_{\bar{x}}(a,u) = 0,$
\item $\bar{u}_Q = 0.$
\end{enumerate}
\end{Remark}

Next, we show that the line search in Step 4 of Algorithm \ref{alg:setopt desc} terminates in finitely many steps.

\begin{Proposition}\label{prop:linesearch is well defined too}
Fix $\beta \in (0,1)$ and consider the functionals $\varphi_{\bar{x}}$ and $\phi $ given in \eqref{eq:varphi_x} and \eqref{eq:phi functional}  respectively. Furthermore,  let  $\left(\bar{a},\bar{u}\right)\in P_{\bar{x}} \times \R^n$ be such that $\phi(\bar{x}) =  \varphi_{\bar{x}}(\bar{a},\bar{u})$ and suppose that $\bar{x}$ is not a strongly   stationary point of \eqref{eq: SP}. The following assertions hold:
\begin{enumerate}
\item There exists $\tilde{t} > 0$ such that

\begin{equation*}
\forall\; t \in (0,\tilde{t}\;], j \in  [\bar{\omega}]: f^{\bar{a}_j}(\bar{x} +t\bar{u})\preceq f^{\bar{a}_j}(\bar{x}) +\beta t\nabla f^{\bar{a}_j}(\bar{x})^\top \bar{u}.
\end{equation*}
\item  Let $\tilde{t}$ be the parameter in statement $(i)$. Then,

\begin{equation*}
\forall \; t\in (0,\tilde{t}\;]: F(\bar{x} + t \bar{u}) \preceq^\ell \left\{f^{\bar{a}_j}(\bar{x}) + \beta t \nabla f^{\bar{a}_j}(\bar{x})^\top \bar{u}\right\}_{j \in  [\bar{\omega}]}   \prec^\ell F(\bar{x}),
\end{equation*} In particular, $\bar{u}$ is a descent direction of $F$ at $\bar{x}$ with respect to the preorder $\preceq^\ell.$
\end{enumerate}

\begin{proof}
$(i)$ Assume otherwise. Then, we could find  a sequence $\{t_k\}_{k\geq 1}$ and $\bar{j}\in   [\bar{\omega}]$ such that $t_k \to 0$ and

\begin{equation}\label{eq:ineqlinesearch}
\forall \; k \in \N: f^{\bar{a}_{\bar{j}}}(\bar{x} +t_k\bar{u}) - f^{\bar{a}_{\bar{j}}}(\bar{x}) - \beta t_k\nabla f^{\bar{a}_{\bar{j}}}(\bar{x})^\top \bar{u} \notin  -K.
\end{equation} As $(\R^m\setminus - K)\cup \{0\}$ is a cone, we can multiply \eqref{eq:ineqlinesearch} by $\frac{1}{t_k}$ for each $k \in \N$ to obtain

\begin{equation}\label{eq:ineqlinesearch fractional}
\forall \; k \in \N: \frac{f^{\bar{a}_{\bar{j}}}(\bar{x} +t_k\bar{u}) - f^{\bar{a}_{\bar{j}}}(\bar{x})}{t_k} - \beta \nabla f^{\bar{a}_{\bar{j}}}(\bar{x})^\top \bar{u} \notin  -K.
\end{equation} Taking now the limit in \eqref{eq:ineqlinesearch fractional} when $k \to +\infty$ we get

\begin{equation*}
(1-\beta)\nabla f^{\bar{a}_{\bar{j}}}(\bar{x})^\top \bar{u} \notin  - \Int K.
\end{equation*} Since $\beta \in (0,1),$ this is equivalent to 

\begin{equation}\label{eq:ineqlinesearch to contradict}
\nabla f^{\bar{a}_{\bar{j}}}(\bar{x})^\top \bar{u} \notin - \Int K.
\end{equation} On the other hand, since $\bar{x}$ is not strongly  stationary, we can apply Proposition \ref{prop: charact stat} to obtain that $\bar{u} \neq 0$ and that $\phi(\bar{x})<0.$  This implies that $\varphi_{\bar{x}}(\bar{a},\bar{u}) <0,$ and hence

$$\max\limits_{j \in [\bar{\omega}]} \left\{\psi_e \left(\nabla f^{\bar{a}_j}(\bar{x})^\top \bar{u}\right) \right\} < -\frac{1}{2}\|\bar{u}\|^2 <0.$$ From this, we deduce that 

\begin{equation*}
\forall \; j \in  [\bar{\omega}]: \psi_e \left(\nabla f^{\bar{a}_j}(\bar{x})^\top \bar{u}\right) < 0 
\end{equation*} and, by Proposition \ref{prop: properties of tammer function} $\ref{item:representability of scalarization}$,

\begin{equation}\label{eq:nabla f -intK}
\forall \; j \in  [\bar{\omega}]: \nabla f^{\bar{a}_j}(\bar{x})^\top \bar{u} \in - \Int K.
\end{equation}
However, this is a contradiction to \eqref{eq:ineqlinesearch to contradict}, and hence the statement is proven.

$(ii)$ From \eqref{eq:nabla f -intK}, we know that 

\begin{equation}\label{eq:ineq aux armijo}
\forall \; j \in  [\bar{\omega}], t \in (0,\tilde{t}\;] :  f^{\bar{a}_j}(\bar{x}) + \beta t\nabla f^{\bar{a}_j}(\bar{x})^\top \bar{u} \prec  f^{\bar{a}_j}(\bar{x}) .
\end{equation} Then, it follows that

\begin{eqnarray*}
\forall\; t \in (0,\bar{t\;]}:  F(\bar{x}) & \overset{(\textrm{Proposition } \ref{prop:domination property})}{\subset} & \left\{ f^{\bar{a}_1}(\bar{x}), \ldots, f^{\bar{a}_{\bar{\omega}}}(\bar{x})    \right\} + K\\
           & \overset{\eqref{eq:ineq aux armijo}}{\subset}  & \left\{\nabla f^{\bar{a}_j}(\bar{x}) + \beta t\nabla f^{\bar{a}_j}(\bar{x})^\top \bar{u}\right\}_{j \in  [\bar{\omega}]} + \Int K\\
           & \overset{(\textrm{Statement } (i))}{\subseteq} & \left\{ f^{\bar{a}_j}(\bar{x}+ t \bar{a}_1), \ldots ,  f^{\bar{a}_j}(\bar{x}+ t \bar{a}_{\bar{\omega}})\right\}_{j \in  [\bar{\omega}]} + K + \Int K\\
           & \subseteq & F(\bar{x} + t \bar{u}) + \Int K,
\end{eqnarray*} as desired.
\end{proof}

\end{Proposition}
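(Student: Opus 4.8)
The plan is to prove the two claims in order, using Proposition \ref{prop: charact stat} to extract a uniform descent property of the direction $\bar{u}$ and then the domination property to lift this from the individual selections to the whole set via $\preceq^\ell$.

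For statement $(i)$, I would argue by contradiction. Suppose no such $\tilde{t}$ exists; then for some fixed index $\bar{j} \in [\bar{\omega}]$ there is a sequence $t_k \to 0^+$ along which the Armijo-type inequality
\begin{equation*}
f^{\bar{a}_{\bar{j}}}(\bar{x} + t_k \bar{u}) - f^{\bar{a}_{\bar{j}}}(\bar{x}) - \beta t_k \nabla f^{\bar{a}_{\bar{j}}}(\bar{x})^\top \bar{u} \notin -K
\end{equation*}
holds for all $k$. I would divide by $t_k$ (legitimate since $(\R^m \setminus -K)\cup\{0\}$ is a cone), pass to the limit using the differentiability of $f^{\bar{a}_{\bar{j}}}$, and obtain $(1-\beta)\nabla f^{\bar{a}_{\bar{j}}}(\bar{x})^\top \bar{u} \notin -\Int K$, hence $\nabla f^{\bar{a}_{\bar{j}}}(\bar{x})^\top \bar{u} \notin -\Int K$. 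The contradiction comes from the other side: since $\bar{x}$ is \emph{not} strongly stationary, Proposition \ref{prop: charact stat} gives $\bar{u}\neq 0$ and $\phi(\bar{x})<0$, so $\varphi_{\bar{x}}(\bar{a},\bar{u})<0$; unpacking the definition \eqref{eq:varphi_x} forces $\psi_e(\nabla f^{\bar{a}_j}(\bar{x})^\top \bar{u})<0$ for \emph{every} $j$, and the representability property (Proposition \ref{prop: properties of tammer function}, item \ref{item:representability of scalarization}) then yields $\nabla f^{\bar{a}_j}(\bar{x})^\top \bar{u}\in -\Int K$ for all $j$, contradicting the previous display.

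For statement $(ii)$, the key ingredient from part $(i)$ is the strict relation $\nabla f^{\bar{a}_j}(\bar{x})^\top \bar{u}\in -\Int K$ for every $j$, which immediately gives $f^{\bar{a}_j}(\bar{x}) + \beta t\nabla f^{\bar{a}_j}(\bar{x})^\top \bar{u} \prec f^{\bar{a}_j}(\bar{x})$ for $t\in(0,\tilde{t}\,]$. I would then chain set inclusions: starting from $F(\bar{x})$, apply the domination property (Proposition \ref{prop:domination property}) to write $F(\bar{x}) \subseteq \{f^{\bar{a}_1}(\bar{x}),\ldots,f^{\bar{a}_{\bar{\omega}}}(\bar{x})\} + K$; use the strict inequalities just derived to move into the shifted set plus $\Int K$; and use the Armijo inequality from part $(i)$ to replace the shifted gradient terms by the actual function values $f^{\bar{a}_j}(\bar{x}+t\bar{u})$ modulo $K$. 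Since $F(\bar{x}+t\bar{u})$ contains these selections, absorbing the extra $K$ into $\Int K$ gives $F(\bar{x})\subseteq F(\bar{x}+t\bar{u}) + \Int K$, which is exactly $F(\bar{x}+t\bar{u})\prec^\ell F(\bar{x})$; the intermediate $\preceq^\ell$ relation with the shifted set follows from the same inclusions. The descent claim is then a direct reading of the definition.

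The main obstacle is bookkeeping the chain of inclusions in part $(ii)$ correctly: one must keep track of which selections index the intermediate sets, ensure the $K$ and $\Int K$ summands combine as $K+\Int K\subseteq \Int K$, and verify that the selections appearing after the Armijo step genuinely belong to $F(\bar{x}+t\bar{u})$ so that the final inclusion closes. The analytic core — the limiting argument in part $(i)$ and the extraction of strict descent from non-stationarity — is routine given Proposition \ref{prop: charact stat} and the representability of $\psi_e$.
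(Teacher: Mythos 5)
Your proposal is correct and matches the paper's own proof essentially step for step: the same contradiction argument with division by $t_k$ and passage to the limit in part $(i)$, the same use of Proposition \ref{prop: charact stat} together with the representability of $\psi_e$ to obtain $\nabla f^{\bar{a}_j}(\bar{x})^\top \bar{u} \in -\Int K$ for all $j$, and the same chain of inclusions via the domination property and $K + \Int K \subseteq \Int K$ in part $(ii)$. Your reading of part $(ii)$ even corrects the typos in the paper's displayed chain (where $f^{\bar{a}_j}(\bar{x})$ appears as $\nabla f^{\bar{a}_j}(\bar{x})$ and $\bar{x}+t\bar{u}$ as $\bar{x}+t\bar{a}_j$), so nothing further is needed.
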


We are now ready to establish the convergence  of Algorithm \ref{alg:setopt desc}.

\begin{Theorem}\label{thm:algo convergence}
Suppose that Algorithm \ref{alg:setopt desc} generates an infinite sequence for which $\bar{x}$ is an accumulation point. Furthermore, assume that $\bar{x}$ is regular for $F.$ Then, $\bar{x}$ is   a stationary point of \eqref{eq: SP}. If in addition $|P_{\bar{x}}| = 1$,  then   $\bar{x}$ is a strongly stationary point of \eqref{eq: SP}.

\end{Theorem}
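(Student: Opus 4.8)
The plan is to set up a standard descent-method convergence argument, exploiting the machinery already developed. Since Algorithm \ref{alg:setopt desc} generates an infinite sequence, it never stops in Step 3, so $u_k \neq 0$ for all $k$, and by Proposition \ref{prop: charact stat} (applied with $Q = P_k$ via Remark \ref{rem:q stat equivalence}) we have $\phi(x_k) = \varphi_{x_k}(a_k, u_k) < 0$ at every iterate. Let $\{x_k\}_{k \in \mathcal{I}}$ be a subsequence converging to $\bar{x}$. The first key step is to invoke regularity: by Lemma \ref{prop:regularity property}, on a neighborhood $U$ of $\bar{x}$ we have $\omega(x) = \bar{\omega}$ and, after relabeling, $P_x \subseteq P_{\bar{x}}$. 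Thus for $k \in \mathcal{I}$ large enough, each $a_k$ lies in the \emph{finite} set $P_{\bar{x}}$, so by passing to a further subsequence I may assume $a_k = \bar{a}$ is constant for some fixed $\bar{a} \in P_{\bar{x}}$.

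Next I would argue by contradiction: suppose $\bar{x}$ is \emph{not} stationary for \eqref{eq: SP}. Then in particular it is not stationary with respect to the singleton $Q = \{\bar{a}\}$, so by Remark \ref{rem:q stat equivalence} the value $\min_{u} \varphi_{\bar{x}}(\bar{a}, u)$ is strictly negative. The plan is to show this forces a uniform decrease in the objective along the iterations that is incompatible with $\{F(x_k)\}$ being (weakly) monotone and bounded below in value. Concretely, I would track a scalarized merit value, e.g. $\max_{j} \psi_e(f^{\bar{a}_j}(x_k))$ or the analogous quantity built from $\psi_{\tilde e}$ as in \eqref{eq:new tammer func in product space}, whose decrease is controlled by the Armijo condition in Step 4 together with Proposition \ref{prop:linesearch is well defined too}. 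The descent inequality $f^{a_{k,j}}(x_{k+1}) \preceq f^{a_{k,j}}(x_k) + \beta t_k \nabla f^{a_{k,j}}(x_k)^\top u_k$, combined with monotonicity of $\psi_e$ (Proposition \ref{prop: properties of tammer function}\ref{item:monotonicity of scalarization}), yields a per-iteration decrease proportional to $\beta t_k \varphi_{x_k}(\bar{a}, u_k)$ (negative), so $t_k \varphi_{x_k}(\bar{a}, u_k) \to 0$.

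The heart of the matter is then a lower bound on the step sizes $t_k$ that rules out $t_k \to 0$ while $\varphi_{x_k}(\bar{a}, u_k)$ stays bounded away from $0$. This is where I expect the main obstacle to lie. By continuity of the gradients $\nabla f^i$ and of $\psi_e$ (Lipschitz, by Proposition \ref{prop: properties of tammer function}\ref{item:sublinearity and lipschitz of scalarization}), the map $x \mapsto \min_u \varphi_x(\bar{a}, u)$ is continuous along the subsequence, so $\varphi_{x_k}(\bar{a}, u_k) \to \min_u \varphi_{\bar{x}}(\bar{a}, u) < 0$; in particular the $u_k$ stay bounded and bounded away from $0$ (by the $\tfrac12\|u\|^2$ term and strong convexity). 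If the Armijo backtracking produced $t_k \to 0$, then for the trial step $\nu^{-1} t_k$ the descent test must have failed for some index $\bar j$; rescaling by $1/(\nu^{-1} t_k)$ and passing to the limit (exactly as in the proof of Proposition \ref{prop:linesearch is well defined too}$(i)$) would give $\nabla f^{\bar{a}_{\bar j}}(\bar{x})^\top \bar u \notin -\Int K$ for a limiting direction $\bar u$, contradicting $\varphi_{\bar{x}}(\bar{a}, \bar u) < 0$ which forces every $\nabla f^{\bar{a}_j}(\bar{x})^\top \bar u \in -\Int K$ by representability (Proposition \ref{prop: properties of tammer function}\ref{item:representability of scalarization}). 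Hence $\inf_k t_k > 0$ along the subsequence, so $\varphi_{x_k}(\bar{a}, u_k) \to 0$, contradicting strict negativity of the limit. Therefore $\bar{x}$ is stationary with respect to $\{\bar a\} \subseteq P_{\bar x}$, hence stationary for \eqref{eq: SP}.

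Finally, for the strengthened conclusion when $|P_{\bar{x}}| = 1$: in that case $P_{\bar{x}} = \{\bar a\}$ is forced, and stationarity with respect to this unique $\bar a$ is exactly strong stationarity by Definition \ref{def:stat sop2} (taking $Q = P_{\bar x}$). So the already-established stationarity with respect to $\{\bar a\}$ immediately upgrades to strong stationarity, and the proof is complete.
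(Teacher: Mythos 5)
Your overall strategy --- using regularity and Lemma \ref{prop:regularity property} to fix $a_k=\bar a$ along the convergent subsequence, extracting $t_k\,\varphi_{x_k}(\bar a,u_k)\to 0$ from an Armijo-driven merit decrease, and then splitting into the cases $\inf_k t_k>0$ and $t_k\to 0$ (the latter handled by rescaling the failed Armijo test) --- is essentially the paper's proof, reorganized as a contradiction argument. However, there is a genuine gap at exactly the place you call the heart of the matter: your proposed merit function does not decrease. The quantity $\max_{j}\psi_e\bigl(f^{\bar a_j}(x_k)\bigr)$ (equivalently $\psi_{\tilde e}\bigl(\tilde f^{\bar a}(x_k)\bigr)$, by \eqref{eq:new tammer func in product space}) is not controlled by the line search: at any iteration with $a_k\neq\bar a$ --- and this may be every iteration outside your subsequence --- Step 4 only constrains the selections $f^{a_{k,j}}$ and says nothing about $f^{\bar a_j}$. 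Even if you instead track the iteration's own tuple, i.e.\ $\max_j\psi_e\bigl(f^{a_{k,j}}(x_k)\bigr)$, the bound fails to telescope: Armijo plus monotonicity and sublinearity of $\psi_e$ give
\begin{equation*}
\max_j\psi_e\bigl(f^{a_{k,j}}(x_{k+1})\bigr)\le\max_j\psi_e\bigl(f^{a_{k,j}}(x_k)\bigr)+\beta t_k\max_j\psi_e\bigl(\nabla f^{a_{k,j}}(x_k)^\top u_k\bigr),
\end{equation*}
but when the tuple is refreshed at iteration $k+1$, a minimal element of $F(x_{k+1})$ need not be dominated by any of the points $f^{a_{k,j}}(x_{k+1})$, so $\max_j\psi_e\bigl(f^{a_{k+1,j}}(x_{k+1})\bigr)$ can exceed $\max_j\psi_e\bigl(f^{a_{k,j}}(x_{k+1})\bigr)$: the maximum can jump back up. (For the \emph{minimum} this cannot happen, thanks to the domination property of Proposition \ref{prop:domination property}.) This is precisely why the paper works with the $\preceq^\ell$-monotone functional $\zeta(A)=\inf_{y\in A}\psi_e(y)$: one has $\zeta(F(x_k))=\min_j\psi_e\bigl(f^{a_{k,j}}(x_k)\bigr)$ at every iterate, and Proposition \ref{prop:linesearch is well defined too}$(ii)$ yields the telescoping inequality (Step 1 of the paper's proof) from which the summability, and hence $t_k\max_j\psi_e\bigl(\nabla f^{a_{k,j}}(x_k)^\top u_k\bigr)\to 0$, follows. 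Since your entire contradiction hinges on this limit, the wrong choice of merit is a real flaw, not a cosmetic one.

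A second, smaller omission: you assert that the merit values are bounded below along the whole sequence but never prove it; monotone decrease alone does not give this. The paper proves it as its Step 2, namely $F(\bar x)\preceq^\ell F(x_k)$ for all $k$, via a finite-index/limit argument along the subsequence. (With the correct min-type merit a shorter route exists: $\zeta\circ F$ is continuous, decreasing along the iterates, and converges to $(\zeta\circ F)(\bar x)$ along the subsequence, hence is bounded below by that value.) Once these two points are repaired, the remainder of your argument --- boundedness of $u_k$, the limit-argmin identity $\varphi_{x_k}(\bar a,u_k)\to\min_u\varphi_{\bar x}(\bar a,u)$, the two-case analysis of the step sizes with the Armijo rescaling, and the upgrade to strong stationarity when $|P_{\bar x}|=1$ --- matches the paper's Steps 3--4 and Cases 1--2, so the proof would then be complete.
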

\begin{proof} Consider the functional $\zeta: \mathscr{P}(\R^m) \rightarrow \R \cup \{- \infty\}$ defined as 

$$\forall \;A \in \mathscr{P}(\R^m): \; \zeta(A):= \inf_{y \in A} \psi_e(y).$$ The proof will be divided in several steps:

\begin{flushleft}
\underline{\textbf{Step 1}}: We show the following result:

\begin{equation}\label{eq: descent lemma}
\forall\; k\in \N\cup\{0\}: \; (\zeta \circ F)(x_{k+1})\leq (\zeta \circ F)(x_k) + \beta t_k \left[\phi(x_k)- \frac{1}{2}\|u_k\|^2 \right].
\end{equation}
\end{flushleft}

Indeed, because of the monotonicity property of $\psi_e$ in Proposition \ref{prop: properties of tammer function} $\ref{item:monotonicity of scalarization}$, the functional $\zeta$ is monotone with respect to the preorder $\preceq^\ell$, that is,  $$\forall\; A,B \in \mathscr{P}(\R^m): A\preceq^\ell B \Longrightarrow \zeta(A)\leq \zeta(B).$$ On the other hand, from Proposition \ref{prop:linesearch is well defined too} $(ii)$, we deduce that 

$$\forall\; k \in \N \cup\{0\}: F(x_k+t_k u_k)\preceq^\ell \left\{f^{a_{k,j}}(x_k)+ \beta t_k \nabla f^{a_{k,j}}(x_k)^\top u_k \right\}_{i \in  [\omega_k]}.$$ Hence, using the monotonicity of $\zeta,$ we obtain for any $k\in \N\cup \{0\}:$ 

\begin{eqnarray*}
(\zeta \circ F)(x_{k+1}) & \leq & \min_{ j \in \; [\omega_k]}\left\{\psi_e \left(f^{a_{k,j}}(x_k)+ \beta t_k \nabla f^{a_{k,j}}(x_k)^\top u_k\right)\right\}\\
                          & \overset{(\textrm{Proposition \ref{prop: properties of tammer function} } \ref{item:sublinearity and lipschitz of scalarization} ) }{\leq} & \min_{ j \in \; [\omega_k]}\left\{\psi_e \left(f^{a_{k,j}}(x_k) \right) + \beta t_k \psi_e \left(\nabla f^{a_{k,j}}(x_k)^\top u_k\right)\right\}\\
                          & \leq & \min_{ j \in \; [\omega_k]}\left\{\psi_e \left(f^{a_{k,j}}(x_k) \right) + \beta t_k \max_{j' \in \; [\omega_k]} \left\{\psi_e \left(\nabla f^{a_{k,j'}}(x_k)^\top u_k  \right)\right\}\right\}\\ 
                          & = &  (\zeta \circ F)(x_k) + \beta t_k \max_{ j \in \;[\omega_k]} \left\{\psi_e \left(\nabla f^{a_{k,j}}(x_k)^\top u_k  \right)\right\}.
\end{eqnarray*} The above inequality, together with the definition of $\phi$ in \eqref{eq:phi functional}, implies \eqref{eq: descent lemma}.

 On the other hand, since $\bar{x}$ is an accumulation point of the sequence $\{x_k\}_{k\geq 0}$, we can find a subsequence $\mathcal{K}$ in $\N$ such that $x_k\overset{\mathcal{K}}{ \to} \bar{x}.$

\begin{flushleft}
\underline{\textbf{Step 2}}: The following inequality holds

\begin{equation}\label{eq: lower bound of F(x_k) is F(xbar)}
\forall\; k \in  \N \cup \{0\}:\; F(\bar{x})\preceq^\ell F(x_k).
\end{equation}
\end{flushleft}

Indeed, from Proposition \ref{prop:linesearch is well defined too} $(ii),$ we can guarantee that the sequence $\{F(x_k)\}_{k\geq 0}$ is decreasing with respect to the preorder $\preceq^\ell$, that is, 

\begin{equation}\label{eq:l-decreasing sequence}
\forall\; k\in \N \cup \{0\}: F(x_{k+1})\preceq^\ell F(x_k).
\end{equation} Fix now $k \in \N,$ and $i \in  [p].$ Then, according to \eqref{eq:l-decreasing sequence}, we have 

\begin{equation}\label{eq:l-decr aux}
\forall\; k' \in \mathcal{K},k' \geq k, \exists \;i_{k'} \in [p]: f^{i_{k'}}(x_{k'})\preceq f^i(x_k).
\end{equation} Since there are only a finite number of possible values for $i_{k'},$ we assume without loss of generality that there is $\bar{i} \in [p]$ such that $i_{k'} = \bar{i}$ for every $k' \in \mathcal{K}, k' \geq k.$ Hence, \eqref{eq:l-decr aux} is equivalent to

\begin{equation}\label{eq:l-decr aux 2}
\forall\; k' \in \mathcal{K},k' \geq k: f^{\bar{i}}(x_{k'}) - f^i(x_k) \in -K.
\end{equation} Taking the limit now in \eqref{eq:l-decr aux 2} when $k' \overset{\mathcal{K}}{\to} + \infty,$ we find that $$ f^i(x_k) \in f^{\bar{i}}(\bar{x}) + K.$$ Since $i$ was chosen arbitrarily in $[p],$ this implies the statement.

\begin{flushleft}
\underline{\textbf{Step 3}}: We prove that the sequence $\{u_k\}_{k\in \mathcal{K}}$ is bounded.
\end{flushleft}

In order to see this, note that, since $x_k$ is not a stationary point, we have by Proposition \ref{prop: charact stat} that $\phi(x_k) < 0$ for every $k\in \N\cup\{0\}.$ By the definition of $a_k$ and $u_k,$ we then have 

\begin{equation}\label{eq:var<0}
\forall \; k\in \N \cup\{0\}: \varphi_{x_k}(a_k,u_k)<0.
\end{equation} Let $\rho$ be the Lipschitz constant of $\psi_e$ from Proposition \ref{prop: properties of tammer function} $\ref{item:sublinearity and lipschitz of scalarization}.$ Then, we deduce that

\begin{eqnarray*}
\forall \; k\in \N \cup\{0\}:\|u_k\|^2 & \overset{(\eqref{eq:var<0}\; +\; \eqref{eq:varphi_x}) }{<}  & -2\max\limits_{j \in \; [\omega_k]}\left\{\psi_e\left(\nabla f^{a_{k,j}}(\bar{x})^\top u_k \right)  \right\}\\
          & = & 2\max\limits_{j \in \; [\omega_k]}\left\{\left|\psi_e\left(\nabla f^{a_{k,j}}(\bar{x})^\top u_k \right)\right|  \right\}\\
          & \overset{(\textrm{Proposition } \ref{prop: properties of tammer function}\; \ref{item:sublinearity and lipschitz of scalarization})}{\leq} & 2 \rho \max\limits_{j \in [\omega_k]}\left\{ \left\|\nabla f^{a_{k,j}}(\bar{x})^\top u_k \right\|\right\}\\
          & \leq & 2 \rho \|u_k\| \max\limits_{j \in \; [\omega_k] } \left\{ \|\nabla f^{a_{k,j}}(x_k)\|\right\}.
\end{eqnarray*} Hence,

\begin{equation}\label{eq:uk bounded xk}
\forall \; k\in \N \cup\{0\}: \|u_k\| \leq 2 \rho  \max\limits_{j \in \; [\omega_k] } \left\{ \|\nabla f^{a_{k,j}}(x_k)\|\right\}.
\end{equation} Since $\{x_k\}_{k\in \mathcal{K}}$ is bounded, the statement follows from \eqref{eq:uk bounded xk}.

\begin{flushleft}
\underline{\textbf{Step 4}}: We show that $\bar{x}$ is stationary. 
\end{flushleft}

Fix $\kappa \in \N.$ Then, it follows from \eqref{eq: descent lemma} that

\begin{equation}
\forall\; k\in \N: \; -\beta t_k \max_{ j \in \; [\omega_k]} \left\{\psi_e \left(\nabla f^{a_{k,j}}(x_k)^\top u_k  \right)\right\} \leq (\zeta \circ F)(x_k)- (\zeta \circ F)(x_{k+1}).
\end{equation} Adding this inequality for $k= 0,\ldots, \kappa,$ we obtain 

\begin{equation}\label{eq:sum bounded zeta}
-\beta\sum_{k=0}^\kappa t_k \max_{ j \in \; [\omega_k]} \left\{\psi_e \left(\nabla f^{a_{k,j}}(x_k)^\top u_k  \right)\right\} \leq (\zeta \circ F)(x^0)- (\zeta \circ F)(x^{\kappa+1}).
\end{equation}

On the other hand, similarly to \eqref{eq:nabla f -intK} in the proof of Proposition \ref{prop:linesearch is well defined too} $(i),$  we obtain that

\begin{equation}\label{eq:all in -IntK}
\forall \; k \in \N\cup\{0\},j \in  [\omega_k]: \nabla f^{a_{k,j}}(x_k)^\top u_k \in - \Int K.
\end{equation} In particular, applying Proposition \ref{prop: properties of tammer function} $\ref{item:representability of scalarization}$ in \eqref{eq:all in -IntK}, we find that

\begin{equation}\label{eq:all <0 scalarized}
\forall \; k \in \N\cup\{0\},j \in  [\omega_k]: \psi_e\left(\nabla f^{a_{k,j}}(x_k)^\top u_k \right)< 0.
\end{equation} We then have

\begin{eqnarray*}
0  \overset{\eqref{eq:all <0 scalarized}}{<}  -\sum\limits_{k=0}^\kappa t_k \max\limits_{ j \in \; [\omega_k]} \left\{\psi_e \left(\nabla f^{a_{k,j}}(x_k)^\top u_k  \right)\right\}    & \overset{\eqref{eq:sum bounded zeta}}{\leq} & \frac{(\zeta \circ F)(x^0)- (\zeta \circ F)(x^{\kappa+1})}{\beta}\\
					 & \overset{(\zeta\textrm{ monotone + }\eqref{eq: lower bound of F(x_k) is F(xbar)})}{\leq}  &\frac{(\zeta \circ F)(x^0)- (\zeta \circ F)(\bar{x})}{\beta}.
\end{eqnarray*} Taking now the limit in the previous inequality when $\kappa \to +\infty,$ we deduce that 

$$0 \leq -\sum_{k=0}^{\infty} t_k \max_{ j \in \; [\omega_k]} \left\{\psi_e \left(\nabla f^{a_{k,j}}(x_k)^\top u_k  \right)\right\} < +\infty.$$ In particular, this implies 

\begin{equation}\label{eq: t x optimality goes to 0}
\lim_{k\to \infty}t_k \max_{ j \in \; [\omega_k]} \left\{\psi_e \left(\nabla f^{a_{k,j}}(x_k)^\top u_k  \right)\right\} =0.
\end{equation}

Since there are only a finite number of subsets of $[p]$ and $\bar{x}$ is regular for $F,$ we can  apply Lemma \ref{prop:regularity property} to obtain, without loss of generality, the existence of  $Q \subseteq P_{\bar{x}}$ and $\bar{a} \in Q$ such that

\begin{equation}\label{eq: all index constant}
\forall \; k\in \mathcal{K}:\; \omega_{k} = \bar{\omega},\;P_{x_{k}} = Q,\;a_{k} = \bar{a}.
\end{equation} Furthermore, since the sequences $\{t_k\}_{k\geq 1}, \{u_{k}\}_{k\in \mathcal{K}}$ are bounded, we can also assume without loss of generality the  existence of $\bar{t} \in \R, \bar{u} \in \R^n$ such that 

\begin{equation}\label{eq: t,d limits}
 t_k \overset{\mathcal{K}}{\to} \bar{t},\;\; u_k \overset{\mathcal{K}}{\to} \bar{u}.
\end{equation} The rest of the proof is devoted to show that $\bar{x}$ is a    stationary point with respect to $Q.$ First, observe that by \eqref{eq: all index constant} and the definition of $a_k,$ we have 

$$\forall\; a\in Q,\; k\in \mathcal{K}, \; u\in \R^n: \phi(x_k)=\varphi_{x_k}(\bar{a}, u_k)\leq \varphi_{x_k}(a,u).$$ Then, taking into account that $\omega_k= \bar{\omega}$ in \eqref{eq: all index constant}, we can take the limit when $k \overset{ \mathcal{K}}{ \to} +\infty$ in the above expression to obtain

$$\forall\; a\in Q,\; u\in \R^n: \varphi_{\bar{x}}(\bar{a}, \bar{u})\leq \varphi_{\bar{x}}(a,u).$$ Equivalently, we have

\begin{equation}\label{eq: argmin of limit}
(\bar{a},\bar{u}) \in \argmin_{(a,u)\in Q\times \R^n} \varphi_{\bar{x}}(a,u).
\end{equation} Next, we analyze two cases:

\begin{flushleft}
\textbf{Case 1}: $\bar{t}>0.$
\end{flushleft}

According to \eqref{eq: t x optimality goes to 0} and \eqref{eq: all index constant}, we have in this case 

\begin{equation}\label{eq: optimality to 0}
\lim_{k \overset{ \mathcal{K}}{\to} + \infty} \max_{j \in [\bar{\omega}]} \left\{\psi_e \left(\nabla f^{\bar{a}_j}(x_k)^\top u_k  \right)\right\}=0.
\end{equation} Then, it follows that

\begin{eqnarray*}
0 & \leq & \frac{1}{2} \|\bar{u}\|^2\\
  &  \overset{ (\eqref{eq: all index constant}  \; + \;\eqref{eq: t,d limits}\;+ \;\eqref{eq: optimality to 0}) }{=}    & \lim_{k \overset{ \mathcal{K}}{\to} + \infty} \max_{ j \in \;[\bar{\omega}]} \left\{\psi_e \left(\nabla f^{\bar{a}_j}(x_k)^\top u_k  \right)\right\} + \frac{1}{2} \|u_k\|^2\\
  &  =  & \lim_{k \overset{ \mathcal{K}}{\to} + \infty} \phi(x_k)\\
  & \overset{\eqref{eq:phi <=0}}{\leq} & 0,
\end{eqnarray*} from which we deduce $\bar{u} = 0.$ This, together with \eqref{eq: argmin of limit} and Remark \ref{rem:q stat equivalence}, imply that $\bar{x}$ is a    stationary point with respect to $Q.$ 

\begin{flushleft}
\textbf{Case 2}: $\bar{t}=0.$
\end{flushleft}

Fix an arbitrary $\kappa \in \N.$ Since $t_k \overset{\mathcal{K}}{\to} 0,$ for $k \in \mathcal{K}$ large enough $\nu^{\kappa}$ does not satisfy Armijo's line search criteria in Step 4 of Algorithm \ref{alg:setopt desc}. By \eqref{eq: all index constant} and the  finiteness of $\bar{\omega},$ we can assume without loss of generality the existence of $\bar{j} \in [\bar{\omega}]$ such that 

$$\forall \; k\in \mathcal{K}:\; f^{\bar{a}_{\bar{j}}}(x_k+ \nu^{\kappa}u_k) \npreceq f^{\bar{a}_{\bar{j}}}(x_k) +\beta \nu^{\kappa}\nabla f^{\bar{a}_{\bar{j}}}(x_k)^\top u_k.$$ From this, it follows that

$$\forall \; k\in \mathcal{K}:\;\frac{f^{\bar{a}_{\bar{j}}}(x_k+ \nu^{\kappa}u_k) - f^{\bar{a}_{\bar{j}}}(x_k)}{\nu^{\kappa}} -\beta \nabla f^{\bar{a}_{\bar{j}}}(x_k)^\top u_k \notin -K.$$ Now, taking the limit when $k \overset{\mathcal{K}}{\rightarrow} +\infty,$ we obtain

$$\frac{f^{\bar{a}_{\bar{j}}}(\bar{x}+ \nu^{\kappa}\bar{u}) - f^{\bar{a}_{\bar{j}}}(\bar{x})}{\nu^{\kappa}} -\beta \nabla f^{\bar{a}_{\bar{j}}}(\bar{x})^\top\bar{u} \notin - \Int K.$$ Next, taking the limit when $\kappa \rightarrow +\infty,$ we get

$$(1-\beta)\nabla f^{\bar{a}_{\bar{j}}}(\bar{x})^\top\bar{u}\notin - \Int K.$$ Since $\beta \in (0,1),$ we deduce that $\nabla f^{\bar{a}_{\bar{j}}}(\bar{x})^\top\bar{u}\notin - \Int K$ and, according to Proposition \ref{prop: properties of tammer function} $\ref{item:representability of scalarization},$ this is equivalent to 

\begin{equation}\label{eq:psi nabla geq0}
\psi_e(\nabla f^{\bar{a}_{\bar{j}}}(\bar{x})^\top\bar{u})\geq 0.
\end{equation} Finally, we find that

\begin{equation*}
0 \overset{\eqref{eq:psi nabla geq0}}{\leq } \psi_e(\nabla f^{\bar{a}_{\bar{j}}}(\bar{x})^\top\bar{u}) \leq \varphi_{\bar{x}}(\bar{a},\bar{u}) \overset{\eqref{eq: argmin of limit}}{=} \min_{(a,u)\in Q\times \R^n} \varphi_{\bar{x}}(a,u) \overset{\eqref{eq:min varphi leq 0}}{\leq} 0,
\end{equation*} which implies 

\begin{equation}\label{eq:min = 00}
\min_{(a,u)\in Q\times \R^n} \varphi_{\bar{x}}(a,u) = 0.
\end{equation} The   stationarity of $\bar{x}$ follows then from \eqref{eq:min = 00} and Remark \ref{rem:q stat equivalence}. The proof is complete.

\end{proof}

\section{Implementation and Numerical Illustrations}\label{sec:numerical algo}

In this section, we report some preliminary numerical experience with the proposed method. Algorithm \ref{alg:setopt desc} was implemented in Python 3 and the experiments were done in a PC with an Intel(R) Core(TM) i5-4200U CPU processor and  4.0 GB of RAM. We describe below some details of the implementation and the experiments:

\begin{itemize}
\item We considered instances of problem \eqref{eq: SP} only for the case in which $K$ is the standard ordering cone, that is, $ K = \R_+^m.$ In addition, we choose the parameter $e \in \Int K$ for the scalarizing functional $\psi_e$ as $e = (1,\ldots,1)^{\top}.$

\item  The parameters $\beta$ and $\nu$ for the line search in Step 4 of the method were chosen as $\beta = 0.0001,\; \nu= 0.500.$

\item The stopping criteria employed was that  $\|u_k\|< 0.0001,$ or a maximum number of $200$ iterations  was reached.

\item For finding the set $\Min(F(x_k),K)$ at  the $k^{th}$-  iteration in Step 1 of the algorithm, we implemented the method developed by G\"{u}nther and Popovici in \cite{guntherpopovici2018}. This procedure requires a strongly monotone functional $\psi : \R^m \rightarrow \R$ with respect to the partial order $\preceq$ for a so called presorting phase. In our implementation, we used $\psi$ defined as follows:

$$\forall \; v\in \R^m: \psi(v) = \sum_{i = 1}^m v_i.$$ 

The other possibility  for finding the set $\Min(F(x_k),K)$ would have been to use the method introduced by Jahn in \cite{jahn2006subdiv,Jahn2011,jahnrathje2006}  with ideas from \cite{younes1993}. However, as mentioned in the introduction, the first approach has better computational complexity. Thus, the algorithm proposed in \cite{guntherpopovici2018} was a clear choice.

\item At the $k^{th}$-  iteration in Step 2 of the algorithm, we worked with the modelling language CVXPY 1.0 \cite{cvxpy_rewriting, cvxpy} for the solution of the problem 

$$ \underset{(a,u)\in P_k \times \R^n}{\min}    \varphi_{x_k}(a,u).$$  Since the variable $a$ is constrained to be in the discrete set $P_k,$ we proceeded as follows: using the solver ECOS \cite{Domahidi2013ecos} within CVXPY, we compute for every $a \in P_k$ the unique solution $u_a$ of the strongly convex problem 

$$ \underset{u\in \R^n}{\min}    \varphi_{x_k}(a,u).$$ Then, we set 

$$(a_k,u_k) = \argmin\limits_{a \in P_k} \varphi_{x_k}(a,u_a).$$ 

\item For each test instance considered in the experimental part,  we generated initial points randomly on a specific set and run the algorithm. We define as solved those experiments in which the algorithm stopped because $\|u_k\|< 0.0001$, and declared that a strongly  stationary point was found. For a given experiment, its final error is the value of $\|u_k\|$ at the last iteration. The following variables are collected for each test instance:
\begin{itemize}
\item \textbf{Solved}: this value indicates the number of initial points for which the problem was solved.

\item \textbf{Iterations}: this is a 3-tuple (min, mean, max) that indicates the minimum, the  mean, and the maximum  of the number of iterations in those instances reported as solved.

\item \textbf{Mean CPU Time}: Mean of the CPU time(in seconds) among the solved cases.

\end{itemize} Furthermore, for clarity, 
all the numerical values will be displayed for  up to  four decimal places.
\end{itemize}

Now, we proceed to the different instances on which our algorithm was tested. Our first test instance can be seen as a continuous version of an example in \cite{guntherkobispopovici2019}.
\begin{Test Instance} \label{test1}
We consider  $F:\mathbb{R} \rightrightarrows \mathbb{R}^2$ defined as 

$$F(x) := \left\{f^1(x), \ldots, f^5(x)\right\}$$ where, for $i \in [5],$  $f^i: \R \rightarrow \R^2$ is given as

$$f^i(x) := \begin{pmatrix}
x \\ \frac{x}{2}\sin(x) 
\end{pmatrix} + \sin^2(x)\left[ \frac{(i-1)}{4 }\begin{pmatrix}
1 \\ -1
\end{pmatrix}+  \left(1- \frac{(i-1)}{4}\right)\begin{pmatrix}
-1 \\ 1
\end{pmatrix}    \right].
$$ 
\end{Test Instance} The objective values in this case are discretized segments moving around a curve and being contracted (dilated) by a factor dependent on the argument. We generated $100$ initial points $x_0$ randomly on the interval $[-5\pi,5\pi]$ and run our algorithm. Some of the metrics are collected in Table \ref{tab1}. As we can see, in this case all the runs terminated finding a strongly  stationary point. Moreover, we observed that for this problem not too many iterations were needed.

\begin{table}[h]
\centering
\begin{tabular}{ |c|c|c|  }
 \hline
 \multicolumn{3}{|c|}{ \textbf{Test Instance \ref{test1}}} \\
 \hline
 \textbf{Solved} &  \textbf{Iterations}  & \textbf{Mean CPU Time} \\
 \hline
 100   &  (0, 13.97, 71)   & 0.1872   \\
 \hline
\end{tabular}

\caption{Performance of Algorithm \ref{alg:setopt desc} in Test Instance \ref{test1}}
\label{tab1} 
\end{table}
In Figure \ref{fig1}, the sequence $\left\{F(x_k)\right\}_{k \in \{0\}\cup [7]}$ generated  by Algorithm \ref{alg:setopt desc} for a selected starting point is shown. In this case, strong stationarity was declared after $7$ iterations. The traces of the curves $f^i$ for $i \in [5]$ are displayed, with arrows indicating their direction of movement. Moreover, the sets $F(x_0)$ and $F(x_{7})$ are represented by black and red points respectively, and the elements of the sets  $F(x_k)$  with $k \in [6]$  are in gray color. The improvements of the objective values after every iteration are clearly observed. 

\begin{figure}[ht]
\includegraphics[scale=0.7]{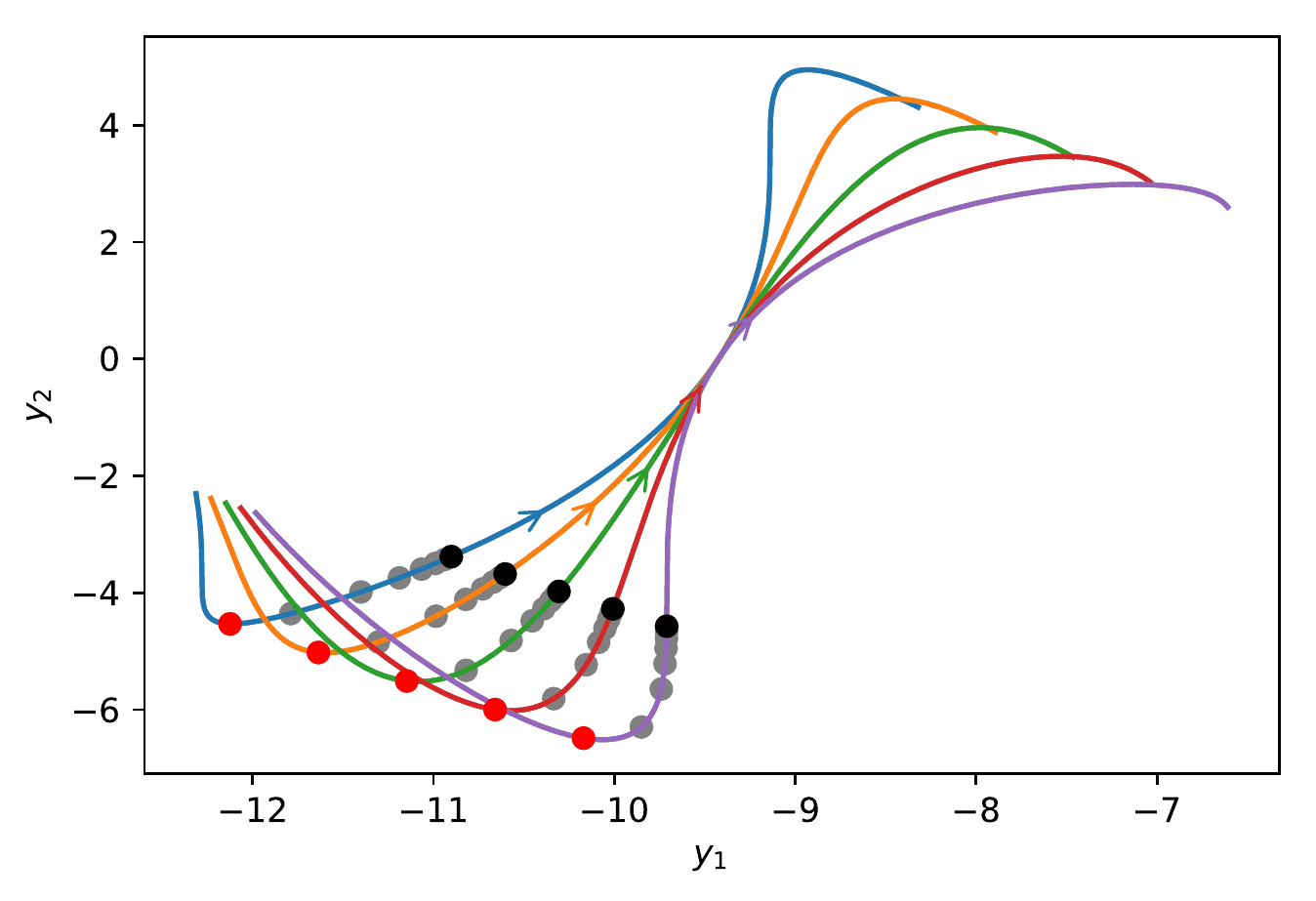}
\centering
\caption{Sequence generated  in the image space by Algorithm \ref{alg:setopt desc} for a selected starting point in Test Instance \ref{test1}}
\label{fig1}
\end{figure}

\begin{Test Instance}\label{test2}
  In this example, we start by taking a uniform partition $\mathcal{U}_1$ of 10 points of the interval $[-1,1]$ that is, 

$$\mathcal{U}_1= \{-1,  -0.7778, -0.5556, -0.3333, -0.1111,  0.1111,  0.3333,  0.5556,   0.7778,  1  \}.$$ Then, the set $\;\mathcal{U}: = \mathcal{U}_1\times \mathcal{U}_1$  is a mesh of 100 points of the square $[-1,1]\times [-1,1].$ Let $\{u_1,\ldots, u_{100}\}$ be an enumeration of $\mathcal{U}$ and consider the points 

$$l_1:= \begin{pmatrix}
0\\0
\end{pmatrix}, \; l_2:= \begin{pmatrix}
8\\0
\end{pmatrix},\; l_3:= \begin{pmatrix}
0\\8
\end{pmatrix}.$$  We define, for $i \in [100],$ the functional $f^i:\R^2 \rightarrow \R^3$ as 

$$f^i(x) := \frac{1}{2}\begin{pmatrix} \|x- l_1-u_i\|^2 \\ \|x- l_2-u_i\|^2 \\ \|x- l_3 -u_i\|^2 \end{pmatrix}.$$ Finally, the set-valued mapping $F: \R^2 \rightrightarrows \R^3$ is defined by 

$$F(x) := \left\{f^1(x), \ldots, f^{100}(x)\right\}.$$ 

\end{Test Instance}

Note that problem \eqref{eq: SP} corresponds in this case to the robust counterpart of a vector location problem under uncertainty \cite{IdeKobisKuroiwa2014}, where $\mathcal{U}$ represents the uncertainty set on the location facilities $l_1,l_2,l_3.$ Furthermore, with the aid of Theorem \ref{thm:oc sopt finite}, it is possible to show that a point $\bar{x}$ is a local weakly minimal solution of \eqref{eq: SP}  if and only if 

\begin{equation*}\label{eq:t2 charact}
\bar{x} \in \conv \left\{l_j + u_i \mid  (i,j) \in I(\bar{x})\times \{1,2,3\} \right\}.
\end{equation*} Thus, in particular, the local weakly minimal solutions lie on the set 

\begin{equation}\label{eq:t2 contained in}
C:=\conv\left((l_1+ \mathcal{U})\cup (l_2+ \mathcal{U}) \cup (l_3+ \mathcal{U}) \right).
\end{equation}

In this test instance, $100$ initial points $x_0$  were generated in the square $[-50,50]\times [-50,50],$ and Algorithm \ref{alg:setopt desc} was ran in each case. A summary of the results are presented in Table \ref{tab2}. Again, for any initial point the sequence generated by the algorithm stopped with a local solution to our problem. Perhaps the most noticeable parameter recorded in this case is the number of iterations required to declare the solution. Indeed, in most cases, only $1$ iteration was enough, even when the starting point was far away from the locations $l_1, l_2, l_3.$

\begin{table}[h]
\centering
\begin{tabular}{ |c|c|c|  }
 \hline
 \multicolumn{3}{|c|}{\textbf{Test Instance \ref{test2}}} \\
 \hline
 \textbf{Solved} & \textbf{Iterations}  & \textbf{Mean CPU Time} \\
 \hline
 100 & (0,1.32,2)    & 0.0637 \\
 \hline
\end{tabular}

\caption{Performance of Algorithm \ref{alg:setopt desc} in Test Instance \ref{test2}} 
\label{tab2} 
\end{table}

In Figure \ref{fig2}, the set of solutions found in this experiment are shown in red. The locations $l_1,l_2,l_3$ are represented by black points and the elements of the set $\left(l_1 + \mathcal{U}\right) \cup \left(l_2 + \mathcal{U}\right) \cup \left(l_3 + \mathcal{U} \right)$ are colored in gray. We can observe, as expected, that all the local solutions found are contained in the set $C$ given in \eqref{eq:t2 contained in}.

\begin{figure}[th]
\includegraphics[scale=0.8]{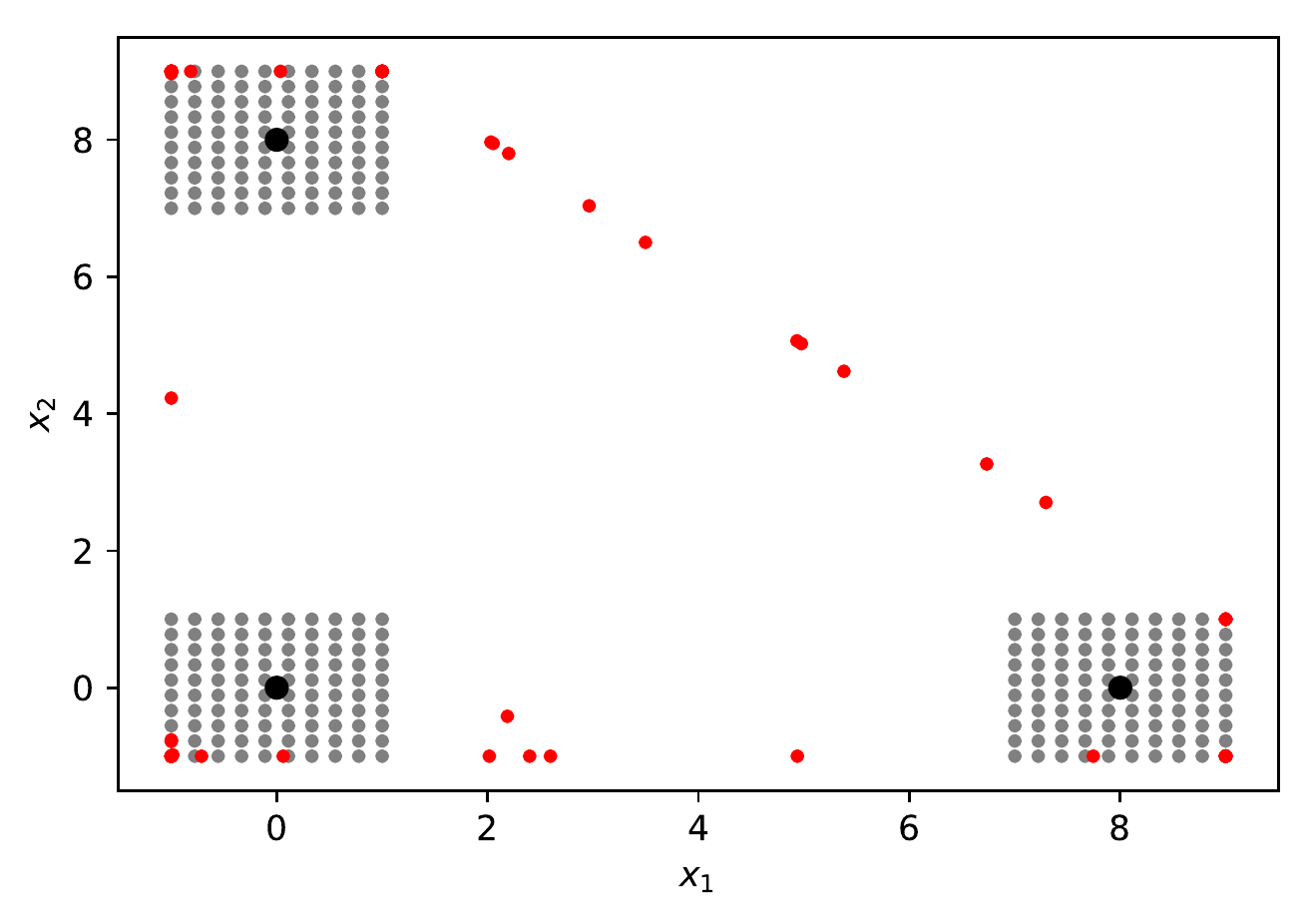}
\centering
\caption{Solutions found (in red) in the argument space for Test Instance \ref{test2}} 
\label{fig2}
\end{figure}

Our last test example comes from \cite{jahn2018tree}.

\begin{Test Instance}\label{test4}  For $i \in [100], $ we consider the functional $f^i: \R^2 \rightarrow \R^2$ defined as
$$f^i(x):= \begin{pmatrix}
e^{\frac{x_1}{2}} \cos(x_2)+ x_1 \cos(x_2) \cos^3\left(\frac{2\pi(i-1)}{100}\right) - x_2\sin(x_2)\sin^3\left(\frac{2\pi(i-1)}{100}\right)\\
e^{\frac{x_2}{20}}\sin(x_1) + x_1 \sin(x_2)\cos^3\left(\frac{2\pi(i-1)}{100}\right) + x_2\cos(x_2)\sin^3\left(\frac{2\pi(i-1)}{100}\right)
\end{pmatrix}.$$ Hence, $F: \R^2 \rightrightarrows \R^2$ is given by 

$$F(x):= \{f^1(x), \ldots , f^{100}(x)\}.$$

\end{Test Instance}

The images of the set-valued mapping  in this example are discretized, shifted, rotated, and deformated rhombuses, see Figure \ref{fig4}. We generated randomly 100 initial points in the square $[-10\pi, 10 \pi] \times [-10\pi, 10 \pi]$ and ran our algorithm. A summary of the results is collected in Table \ref{tab4}. In this case, only for $88$ initial points a solution was found. In the rest of the occasions, the algorithm stopped because the maximum number of iterations was reached. Further examination in these unsolved cases revealed that, except for two of the initial points, the final error was of the order of $10^{-1}$ (even $10^{-3}$ and $10^{-4}$ in half of the cases). Thus, perhaps only a few more iterations were needed in order to declare strong stationarity. 
\begin{figure}[h]
\includegraphics[scale=0.8]{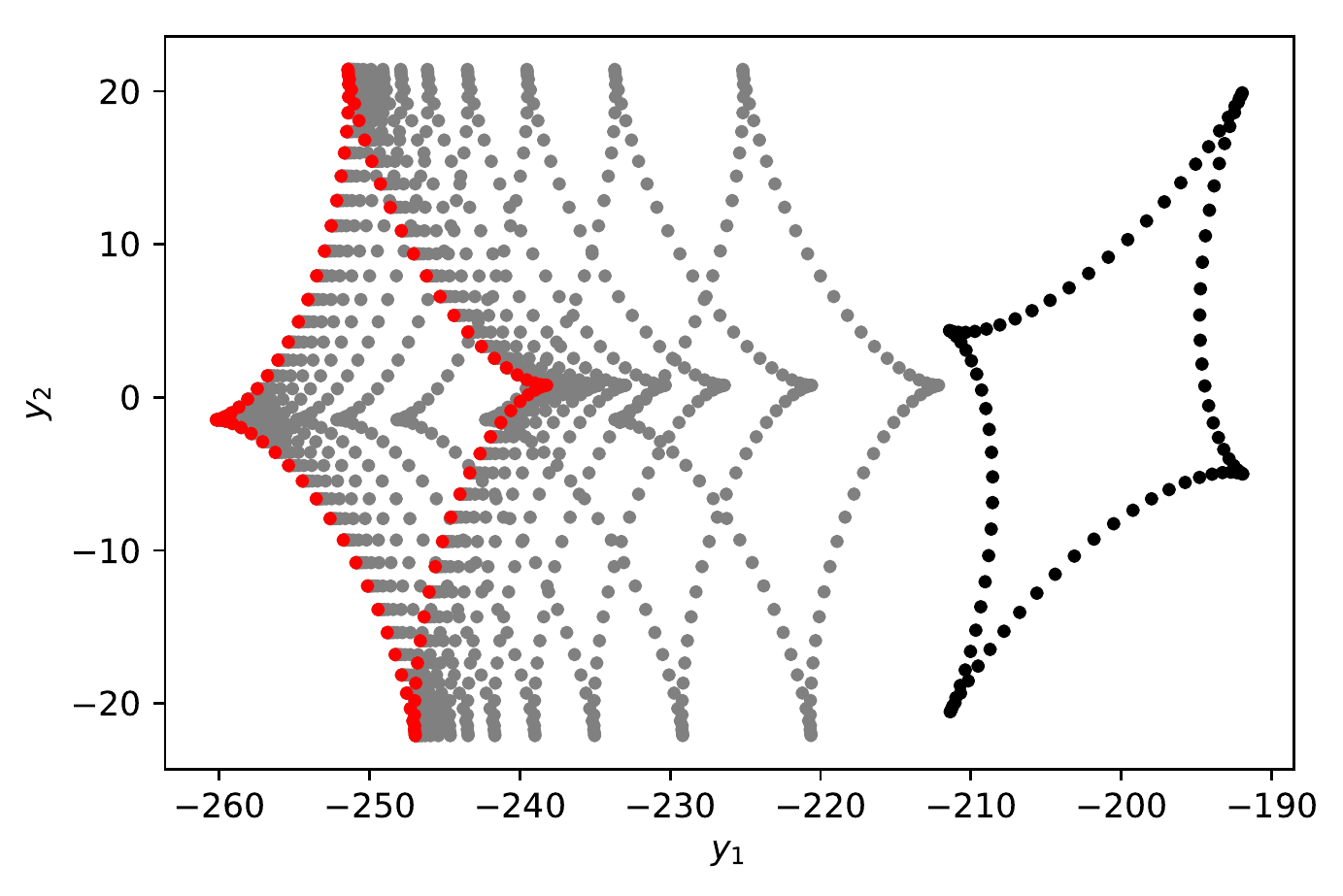}
\centering
\caption{Sequence generated in the image space   by Algorithm \ref{alg:setopt desc} for a selected starting point in Test Instance \ref{test4}.}
\label{fig4}
\end{figure}

%
%
%
%
%
%
%
%

\begin{table}[h]
\centering
\begin{tabular}{ |c|c|c|  }
 \hline
 \multicolumn{3}{|c|}{\textbf{Test Instance \ref{test4}}} \\
 \hline
 \textbf{Solved} & \textbf{Iterations}  & \textbf{Mean CPU Time} \\
 \hline
   88 & (0, 11.9091 , 110)    &  0.8492  \\
 \hline
\end{tabular}

\caption{Performance of Algorithm \ref{alg:setopt desc} in Test Instance \ref{test4}.} 
\label{tab4} 
\end{table}

Figure \ref{fig4} illustrates the sequence $\left\{F(x_k)\right\}_{k \in \{0\} \cup[18]}$ generated  by Algorithm \ref{alg:setopt desc} for a selected starting point. Strong  stationarity was declared after  18 iterations in this experiment. The sets $F(x_0)$ and $F(x_{18})$ are represented by black and red points respectively, and the elements of the sets  $F(x_k)$  with $k \in [17]$  are in gray color. Similarly to the other test instances, we can observe that at every iteration the images decrease with respect to the preorder $\preceq^\ell.$

\section{Conclusions}\label{sec: conclussions}

In this paper, we considered set optimization problems with respect to the lower less set relation, were the set-valued objective mapping can be decomposed into a finite number of continuously differentiable selections. The main contributions are the tailored optimality conditions derived using the first order information of the selections in the decomposition, together with an algorithm  for the solution of the problems with this structure. An attractive feature of our method is that we are able to guarantee convergence towards points satisfying the previously mentioned optimality  conditions. To the best of our knowledge, this would be the first procedure having such property in the context of set optimization. Finally, because of the applications of problems with this structure in the context of optimization under uncertainty, ideas for further research include the development of cutting plane strategies for general set optimization problems, as well as the extension of our results to other set relations.

\nocite{ernestdiss}
\bibliographystyle{siam}
\bibliography{references}
\end{document}